\begin{document}
\title[Sharp Hardy-Leray inequality for solenoidal fields]{Sharp Hardy-Leray inequality\\ for solenoidal fields }
\author[N. Hamamoto]{Naoki Hamamoto}
\address{\parbox{\linewidth}{
Department of Mathematics, Graduate School of Science, Osaka Metropolitan University,
3-3-138 Sugimoto, Sumiyoshi-ku, Osaka 558-8585, Japan}}
\email{g00glyoe@gmail.com {\rm (N. Hamamoto)}}
\stackMath
\begin{abstract}
We compute the best constant in functional integral inequality called the Hardy-Leray inequalities for solenoidal vector fields on $\mathbb{R}^N$. This gives a solenoidal improvement of the inequalities whose best constants are known for unconstrained fields, and develops of the former work by Costin-Maz'ya \cite{Costin-Mazya} who found the best constant in the Hardy-Leray inequality for axisymmetric solenoidal fields. We derive the same best constant without any symmetry assumption, whose expression can be simplified in relation to the weight exponent.  Moreover, it turns out that the best value cannot be attained in the space of functions satisfying the finiteness of the integrals in the inequality.
\end{abstract}
\subjclass[2010]{Primary 35A23; Secondary 26D10.}
\keywords{Hardy-Leray inequality, Solenoidal, Poloidal, Toroidal, Spherical harmonics, Best constant, Power weight}
\maketitle

\section{Introduction}
We study functional inequality called the Hardy-Leray inequality, from the viewpoint that how the best value of the constant changes when we assume that the test functions are vector-valued and solenoidal (namely divergence-free) on $N$-dimensional Euclidean space $\mathbb{R}^N$. 

\subsection{Basic notation and definitions}
\label{subsec:1.1}
Throughout this paper, $N$ denotes an integer and we assume that $N\ge 3$ unless otherwise specified. 
We always use bold letters to denote vectors, e.g., ${\bm x}=(x_1,x_2,\cdots,x_N)\in\mathbb{R}^N$ which is an $N$-dimensional vector. 
 For every open subset $\Omega$ of $\mathbb{R}^N$, the notation ${\bm u}=(u_1,\cdots,u_N)\in C^\infty(\Omega)^N$ means that $\{u_1,\cdots,u_N\}\subset C^\infty(\Omega)$, or equivalently that
\[{\bm u}:\Omega\to\mathbb{R}^N,\qquad {\bm x}\mapsto {\bm u}({\bm x})=(u_1({\bm x}),\cdots,u_N({\bm x}))\]
is a vector field (namely $\mathbb{R}^N$-valued function) with the components 
belonging to the set $C^\infty(\Omega)$ of smooth scalar fields (namely $\mathbb{R}$-valued functions) on $\Omega$. 
The same also applies to other sets of functions: e.g., the notation ${\bm u}\in C_c^\infty(\Omega)^N$ means that ${\bm u}$ is a smooth vector field with compact support on $\Omega$. 

The symbol $\nabla$ denotes the gradient operator which maps every scalar field $f$ on (a domain of) $\mathbb{R}^N$ to the vector field $ \nabla f=\(\frac{\partial f}{\partial x_1},\frac{\partial f}{\partial x_2},\cdots,\frac{\partial f}{\partial x_N}\)$. 
The same also applies to every vector field ${\bm u}$ in the 
componentwise sense:  
we set
\[ \nabla {\bm u}
=\(\nabla u_1,\nabla u_2,\cdots,\nabla u_N\)=\(\frac{\partial u_j}{\partial x_k}\)_{(j,k)\in\{1,2,\cdots,N\}^2}:\Omega\to\mathbb{R}^{N\times N}\]
which is a matrix field. 

The notation ${\bm u}\cdot {\bm v}=\sum_{k=1}^Nu_kv_k$ denotes the standard scalar  product of two vector fields, and $|{\bm u}|=\sqrt{{\bm u}\cdot {\bm u}}$ denotes the modulus of ${\bm u}$. 
The same also applies to matrix fields, 
e.g., \ $\displaystyle \nabla {\bm u}\cdot\nabla\bm{v}=\sum_{j=1}^N\sum_{k=1}^N \frac{\partial u_j}{\partial x_k}\frac{\partial v_j}{\partial x_k}$ \ and  \ $|\nabla\bm{u}|=\sqrt{\nabla\bm{u}\cdot \nabla\bm{u}}.$ 

\subsection{Preceding results and motivation}
\label{subsec:PM}
Hereafter $\gamma$ denotes a real number. 
The classical Hardy-Leray inequality with power weight (or shortly H-L inequality) 
on $\mathbb{R}^N$ is given by
\begin{equation}
 \int_{\mathbb{R}^N}|\nabla {\bm u}|^2|\bm{x}|^{2\gamma}dx\ge \left(\gamma+\mfrac{N-2}{2}\right)^2\int_{\mathbb{R}^N}\frac{|{\bm u}|^2}{|{\bm x}|^2}|\bm{x}|^{2\gamma}dx,
\end{equation}
where  the  constant number $\(\gamma+\frac{N-2}{2}\)^2$ is known to be the sharp for unconstrained vector fields ${\bm u}\in C^\infty(\mathbb{R}^N)^N$ under suitable regularity 
condition. 
This inequality was shown firstly for scalar fields 
by J. Leray \cite{Leray}  for $N=3$ and $\gamma=0$ along his study on the Navier-Stokes equations, as an extension of the $1$-dimensional inequality
by G. H. Hardy\;\cite{Hardy}. The scalar field version of H-L inequality could be trivially extended to vector fields by applying it to each of the components.

One of the questions of interest is whether the best value of the constant in the H-L inequality can exceed the original one 
by imposing on ${\bm u}$ to be {\it solenoidal}, that is,
\[
{\rm div}\:\!\bm{u}=\sum_{k=1}^N\frac{\partial u_k}{\partial x_k}\equiv 0.
\]
Such a problem arises in a natural way in the context of hydrodynamics, as asked by O. Costin and V. G. Maz'ya \cite{Costin-Mazya}, who derived the H-L inequality
 \[\int_{\mathbb{R}^N}|\nabla {\bm u}|^2|\bm{x}|^{2\gamma}dx\ge C_{N,\gamma}\int_{\mathbb{R}^N}\frac{|{\bm u}|^2}{|{\bm x}|^2}|\bm{x}|^{2\gamma}dx\]
for axisymmetric solenoidal fields ${\bm u}$ with the new constant number
\begin{equation}
C_{N,\gamma}= 
\( \gamma + \tfrac{N-2}{2} \)^2 + \min\left\{N-1,\ 2+\min_{\tau\ge0}\(\tau+\tfrac{4(N-1)(\gamma-1)}{\tau+N-1+\(\gamma-\frac{N}{2}\)^2}\)\right\}
\label{CM_orig}
\end{equation}
which we simply call C-M constant.  Here the axisymmetry of a vector field means, in brief, that every component of it along the cylindrical coordinates depends only on the axial distance and the height. Notice that the expression \eqref{CM_orig} 
can be further simplified as
\begin{align}
  C_{N,\gamma}
&=\( \gamma + \tfrac{N-2}{2} \)^2 + \min\left\{N-1,\ 2+\tfrac{4(N-1)(\gamma-1)}{N-1+\(\gamma-\frac{N}{2}\)^2}\right\}
\label{CM_2}
\\&= 
\left\{
\begin{array}{ll}
 \(\gamma+\frac{N-2}{2}\)^2+N-1 &\text{for }\ \gamma\in I_N,
\\[0.5em]
 \(\gamma+\frac{N-2}{2}\)^2 \frac{\(\gamma-\frac{N}{2}\)^2+N+1}{\(\gamma-\frac{N}{2}\)^2+N-1}\ &\text{for }\ \gamma\not\in I_N,
\end{array}
\right.
\label{CM_const}
\end{align}
where $I_N=(\gamma_N^-,\gamma_N^+)\subset\mathbb{R}$ denotes the open interval between 
\[
 \gamma_N^-=\mfrac{N}{2}-\mfrac{N-1}{\sqrt{N+1}+2}\quad\ \text{ and }\quad \gamma_N^+=\left\{
\begin{array}{cl}
 \frac{N}{2}+\frac{N-1}{\sqrt{N+1}-2}&	(N\ge4)     \vspace{0.4em}\\
\infty&(N=3)
\end{array}
\right..
\]
Indeed, one can check by an elementary numerical calculation that the two numbers \eqref{CM_orig} and \eqref{CM_const} coincide, see \cite{HL-const_AdM} for the details.
An advantage of the condition of axisymmetry is that it helps us to easily compute the new best constant, normally without affecting the final result. 
However, it was also pointed out in \cite{RL_div} that the sharpness of the C-M constant under the axisymmetry condition is not always valid when $N\ge 4$; more precisely, it was shown that the best constant for axisymmetric solenoidal fields should be
\[
C^{\rm axis}_{N,\gamma}=
\left\{\begin{array}{cl}
C_{3,\gamma}&(N=3),
\\[0.5em]
\displaystyle \(\gamma+\tfrac{N-2}{2}\)^2+2+\min_{\tau\ge 0}\(\tau+\tfrac{4(N-1)(\gamma-1)}{\tau+N-1+\(\gamma-\frac{N}{2}\)^2}\)
& (N\ge 4).
\end{array}\right.
\]
Combining this fact together with the same numerical calculation in \cite{HL-const_AdM} or Lemma~\ref{lemma:const1}, one can precisely check that the case $C_{N,\gamma}\ne C^{\rm axis}_{N,\gamma}$ occurs when $\gamma\in I_N$ with $N\ge 4$. 
Nevertheless, we may expect that this gap could be reduced by weakening or excluding the condition of axisymmetry, since such a condition seems to play a technical rather than essential role, in order to purely obtain a solenoidal improvement of H-L inequality.

In view of the observation above, there was  an advance in the three-dimensional case: Hamamoto \cite{3D_NA} succeeded in deriving the C-M constant of H-L inequality for solenoidal fields $\bm{u}$ on $\mathbb{R}^3$ without any symmetry assumption at all, after his joint work with Takahashi \cite{swirl_CPAA} investigating the same inequality under a weakened axisymmetry condition. Consequently the axisymmetry assumption for $N=3$ turned out to be completely removable from the solenoidal improvement of H-L inequality. As a matter of course, it is then expected that the same also applies to the higher dimensional case $N\ge 4$ and that the validity of the sharpness of C-M constant could be recovered.  This is the main theme of our present study.

 As a side note, it is also worthy to consider how the best constants change in H-L and R-H inequalities for curl-free vector fields (instead of solenoidal ones).  Some topics related to this problem can be found in 
 the papers \cite{CF_MAAN,CF_JFA,CF_MIA
 }. 

\subsection{Main results}
\label{subsec:1.2}
In order to formally state our main theorem, 
let 
$\mathcal{D}_\gamma(\mathbb{R}^N)
$ denote the space of all smooth vector fields on $\mathbb{R}^N$ satisfying the finiteness of the norm:
\[
\begin{aligned}
&\|\bm{u}\|_{\gamma}:=\(\int_{\mathbb{R}^N}\(|\nabla {\bm u}|^2+\frac{|\bm{u}|^2}{|\bm{x}|^2}\)|{\bm x}|^{2\gamma}dx\)^{1/2}<\infty.
\end{aligned}
\] 
As a matter of fact, one can check that the set of smooth solenoidal fields with compact support on $\mathbb{R}^N\setminus\{\bm{0}\}$  forms a dense subspace of solenoidal fields in $\mathcal{D}_\gamma(\mathbb{R}^N)$  with respect to the norm $\|\cdot\|_{\gamma};$  for the detailed proof, 
see e.g. the discussion in \cite[\S3.1]{RL_CVPD}. 

Now, our first main result reads as follows:
\begin{theorem}
\label{theorem:HL}
Let ${\bm u} \in \mathcal{D}_\gamma(\mathbb{R}^N)$ be a solenoidal field. 
Then the inequality
\begin{equation}
\int_{\mathbb{R}^N} |\nabla {\bm u}|^2 |{\bm x}|^{2\gamma} d{x}\ge C_{N,\gamma} \int_{\mathbb{R}^N} \frac{|{\bm u}|^2}{|{\bm x}|^2} |{\bm x}|^{2\gamma} d{x}
\label{H-L}
\end{equation}
holds with the best constant $C_{N,\gamma}$ given in \eqref{CM_const}, where the equality holds if and only if $\bm{u}\equiv \bm{0}.$
\end{theorem}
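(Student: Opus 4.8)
The plan is to reduce the vector inequality \eqref{H-L} to a family of one-dimensional weighted Hardy inequalities indexed by spherical-harmonic modes, and then to minimize the resulting mode-wise constants. First I would invoke the density statement quoted above to restrict to solenoidal $\bm{u}\in C_c^\infty(\mathbb{R}^N\setminus\{\bm{0}\})^N$, so that all integrations by parts are justified with no boundary contributions at the origin or at infinity. Writing $\bm{x}=r\bm{\omega}$ with $r=|\bm{x}|$ and $\bm{\omega}\in\mathbb{S}^{N-1}$, I would split each field into its radial and tangential parts and apply the \emph{poloidal--toroidal decomposition}: the toroidal part is tangent to the spheres $\{|\bm{x}|=r\}$ and divergence-free on each of them, while the poloidal part carries the radial component, its tangential counterpart being tied to the radial one precisely by the constraint $\operatorname{div}\bm{u}=0$.

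Next I would expand the angular profiles in (scalar and vector) spherical harmonics on $\mathbb{S}^{N-1}$. Since these are eigenfunctions of the Laplace--Beltrami operator, with eigenvalue $n(n+N-2)$ on degree-$n$ harmonics, the angular integrations diagonalize: by orthogonality both sides of \eqref{H-L} become sums over modes of one-dimensional integrals of the form $\int_0^\infty(\cdots)\,r^{2\gamma+N-1}\,dr$ in the radial profiles. The essential computational input is the explicit form of $|\nabla\bm{u}|^2$, which in spherical coordinates splits into a radial-derivative term and an angular term of size $r^{-2}$; here the connection terms arising from differentiating the moving frame must be tracked carefully, as they shift the effective angular eigenvalue and are exactly what produces the solenoidal gain over the scalar constant $\left(\gamma+\frac{N-2}{2}\right)^2$.

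With the problem split mode by mode, each toroidal mode yields a scalar weighted Hardy inequality whose best constant can be read off, while each poloidal mode, after eliminating the tangential component through the solenoidal relation, yields a constrained one-dimensional variational problem in the remaining radial profile; its value is the poloidal mode constant. Taking the infimum of all mode-wise constants over the harmonic degree $n$ and over the poloidal/toroidal type produces the claimed value, and the reduction of that infimum to the closed form \eqref{CM_const} is carried out by the elementary optimization of Lemma~\ref{lemma:const1}. I expect the main obstacle to lie in correctly identifying the minimizing mode when $N\ge4$: unlike the scalar case or the axisymmetric reduction, the full non-axisymmetric class contains higher-degree poloidal and transverse (tangentially divergence-free) modes whose contributions can push the mode-wise constant below the axisymmetric value $C^{\mathrm{axis}}_{N,\gamma}$, and the term $N-1$ in \eqref{CM_const} reflects precisely such a mode. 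One must therefore verify, across all degrees and both types, that no mode drives the constant below $C_{N,\gamma}$, the delicate regime being $\gamma\in I_N$ where the two competing expressions inside the minimum cross.

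Finally, for the non-attainment and sharpness I would observe that the infimum defining each mode constant is approached only by the scale-critical power profile $r^{-\gamma-\frac{N-2}{2}}$ (times the relevant harmonic), which fails the finiteness of $\|\cdot\|_\gamma$; truncating this profile produces a minimizing sequence that confirms $C_{N,\gamma}$ is sharp, while the absence of a genuine minimizer within $\mathcal{D}_\gamma(\mathbb{R}^N)$ forces strict inequality in \eqref{H-L} for every $\bm{u}\not\equiv\bm{0}$, giving the equality case.
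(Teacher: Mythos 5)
Your overall route --- density reduction, poloidal--toroidal splitting, spherical-harmonic diagonalization, reduction to one-dimensional radial problems, optimization over modes, and truncated scale-critical profiles for sharpness --- is the same as the paper's, and it would indeed produce the inequality \eqref{H-L} with the constant \eqref{CM_const}. The genuine gap is in your final paragraph, the equality case. The claim that ``the absence of a genuine minimizer within $\mathcal{D}_\gamma(\mathbb{R}^N)$ forces strict inequality for every $\bm{u}\not\equiv\bm{0}$'' is circular: non-attainability \emph{is} the assertion that no nonzero field gives equality, and the only evidence you offer is that the formal optimizer $r^{-\gamma-\frac{N-2}{2}}$ times a harmonic fails $\|\cdot\|_\gamma<\infty$. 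That does not rule out equality being attained by some other configuration; one must show that \emph{any} field attaining equality necessarily has the critical-power form. The paper does this quantitatively: it proves inequalities with an explicit nonnegative remainder, namely \eqref{HL_rem_P} for poloidal fields and \eqref{H-L_tor_rem} for toroidal ones, where $\mathcal{R}_{N,\gamma}$ is the functional \eqref{rem}. Equality then forces $\mathcal{R}_{N,\gamma}[\bm{u}]=0$, i.e.\ $|\bm{x}|^{\gamma+\frac{N-2}{2}}\bm{u}$ is constant along rays, and integrability kills $\bm{u}$. Moreover this requires a case split supported by two nontrivial facts (Lemmas~\ref{lemma:const1} and~\ref{lemma:const2}): for $\gamma\in I_N$ one uses the strict gap $C^{\rm pol}_{N,\gamma}>C^{\rm tor}_{N,\gamma}$ to annihilate the poloidal part and the toroidal remainder, while for $\gamma\not\in I_N$ one needs the coefficient $c_{N,\gamma}$ of \eqref{c1} to be strictly positive, which is exactly the contrapositive of Lemma~\ref{lemma:const2}. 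None of this machinery appears in your sketch, yet the equality statement is an explicit part of the theorem. There is also a transfer issue: your mode-wise identities are established for fields in $C_c^\infty(\dot{\mathbb{R}}^N)^N$; density extends the \emph{inequality} to $\mathcal{D}_\gamma(\mathbb{R}^N)$, but the equality-case analysis must survive that limit, which is precisely what the remainder-term formulation is designed to do.

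A second, smaller soft spot: for $N\ge4$ a toroidal field admits no single scalar potential, so ``each toroidal mode yields a scalar weighted Hardy inequality whose best constant can be read off'' is not immediate --- this is exactly the three-dimensional technique that the paper points out is unavailable in higher dimensions. The paper sidesteps it by proving the zero-spherical-mean property of toroidal fields (Corollary~\ref{corollary}), which gives the componentwise angular bound $\int_{\mathbb{S}^{N-1}}|\nabla_{\!\sigma}\bm{u}|^2\,\mathrm{d}\sigma\ge(N-1)\int_{\mathbb{S}^{N-1}}|\bm{u}|^2\,\mathrm{d}\sigma$ and hence $C^{\rm tor}_{N,\gamma}=\left(\gamma+\frac{N-2}{2}\right)^2+N-1$ with essentially no further work. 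Your plan via vector spherical harmonics can be made to work, but it needs an extra ingredient you do not supply: identifying the bottom of the Laplace--Beltrami spectrum restricted to tangential divergence-free fields on $\mathbb{S}^{N-1}$.
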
%
Hence the sharpness of the expression of C-M constant turns out to be valid even for $N\ge 4$ under no assumption of axisymmetry. Moreover,  
one may conclude from the theorem that the strict inequality $C_{N,\gamma}>\(\gamma+\frac{N-2}{2}\)^2$ holds for  all $\gamma\ne-\frac{N-2}{2},$ which says that the solenoidal improvement (almost) always takes effect.

\begin{remark}
By forcibly substituting $N=2$ into \eqref{CM_2} and \eqref{H-L}, we get the two-dimensional H-L inequality
\[
\int_{\mathbb{R}^2}|\nabla\bm{u}|^2|\bm{x}|^{2\gamma}dx\ge C_{2,\gamma}\int_{\mathbb{R}^2}\frac{|\bm{u}|^2}{|\bm{x}|^2}|\bm{x}|^{2\gamma}dx
\]
for solenoidal fields $\bm{u}$ on $\mathbb{R}^2$ with $C_{2,\gamma}=
	\left\{
	\begin{array}{ll}
	\gamma^2+1 &  {\rm for}\ |\gamma-1|<\sqrt{3} 
	\\ \(\frac{(\gamma-1)^2+3}{(\gamma-1)^2+1}\)\gamma^2 &{\rm for}\ |\gamma-1|\ge \sqrt{3} 
	\end{array}
	\right.,$ which reproduces the same result as in \cite{Costin-Mazya} with $N=2.$ Hence, it consequently turns out that Theorem \ref{theorem:HL} is also valid even in the two-dimensional case.
\end{remark}

\subsection{An overview of the rest of this paper}
A key tool that will be used in the proof of Theorem \ref{theorem:HL} is the so-called  
poloidal-toroidal decomposition (or shortly PT) theorem, which was used in the previous work \cite{3D_NA} for only three-dimensional sharp H-L inequality; our recent work \cite{RL_CVPD} also used PT theorem for sharp Rellich-Leray inequality, by quoting the preprint \cite{HL_pre} which corresponds to the present paper. 
The PT theorem in our study, which originates from G. Backus \cite{Backus} on $\mathbb{R}^3$ and 
serves as a specialized version of N. Weck's  \cite{Weck} on $\mathbb{R}^N$,  
 enables us to separate the calculation of the best constant into two computable parts. 
Some techniques on $\mathbb{R}^3$, employed in the previous work, are not allowed in the higher-dimensional case:  we cannot use the concept of ``cross product'' of vectors in general $\mathbb{R}^N$, and there is no way to represent every toroidal field in terms of a single-scalar potential.  To avoid such a difficulty, we derive zero-spherical-mean property of toroidal fields, from which one can readily deduce an appropriate $L^2(\mathbb{S}^{N-1})$ estimate. 


In the remaining content of this paper, we will prove the theorem in the organization as follows: Section~\ref{sec:VC} reviews vector calculus on $\mathbb{R}^N\setminus\{{\bm 0}\}$ in terms of radial-spherical variables. Section~\ref{sec:PT} gives a systematic introduction to the concept of smooth PT fields and  establishes the PT decomposition theorem of solenoidal fields on $\mathbb{R}^N$. 
Section~\ref{sec:main} gives the proof of Theorem~\ref{theorem:HL}, where we compute $C_{N,\gamma}$ to derive the expression \eqref{CM_orig} by making full use of the PT theorem and FET 
and show the non-attainability of $C_{N,\gamma}$ by exploiting the expression \eqref{CM_const}. 

\section{Standard Vector Calculus on {$\dot{\mathbb{R}}^N\cong\mathbb{R}_+\times\mathbb{S}^{N-1}$}}
\label{sec:VC}
We introduce some notations frequently used in this paper and briefly review some basic formulae related to gradient or Laplace operators acting on smooth functions on $\dot{\mathbb{R}}^N$, from the viewpoint of radial-spherical variables.
\subsection{Radial-spherical decomposition of operators }
\label{subsec:RS}
Hereafter we use the abbreviation
\[\dot{\mathbb{R}}^N:=\mathbb{R}^N\setminus\{{\bm 0}\}=\{{\bm x}\in\mathbb{R}^N ;\ {\bm x}\ne {\bm 0}\}.\]
In the sense of differential geometry, this is a smooth manifold diffeomorphic to the product of the half line $\mathbb{R}_+=\left\{{r}\in\mathbb{R}\ ;\ {r}>0\right\}$ and the unit $(N-1)$-sphere $\mathbb{S}^{N-1}=\left\{{\bm x}\in\mathbb{R}^N\ ;\ |{\bm x}|=1\right\}$. In our setting,  we make the identification
\[
\dot{\mathbb{R}}^N\cong \mathbb{R}_+\times\mathbb{S}^{N-1},\qquad{\bm x}\mapsto({r},{\bm \sigma})
\]
by the relation ${\bm x}={r}{\bm \sigma}$ or equivalently 
\begin{equation}
 {r}=|{\bm x}|\quad\text{ and }\quad
 {\bm \sigma}=\dfrac{{\bm x}}{|{\bm x}| }.
  \label{r-sigma}
\end{equation}
In view of the two equations, we may consider $({r},{\bm \sigma})$  as a pair of positive scalar field and unit vector field on $\dot{\mathbb{R}}^N$ which are related by the single equation $\nabla {r}={\bm \sigma}$. 

Every vector field ${\bm u}=(u_1,u_2,\cdots,u_N):\dot{\mathbb{R}}^N\to \mathbb{R}^N$  
has as its components the scalar field $u_R$ and  vector field ${\bm u}_S$  satisfying
\begin{equation}
 {\bm u}={\bm \sigma}u_R+{\bm u}_S\ \quad\text{ and } \quad {\bm \sigma}\cdot {\bm u}_S=0
\qquad{\rm on}\ \, \dot{\mathbb{R}}^N,
\end{equation}
which we call the radial-spherical decomposition  (or shortly RS decomp.) of ${\bm u}$. 
This pair is unique and specified by the equations
\begin{equation}
 u_R={\bm \sigma}\cdot {\bm u}\quad \text{ and }\quad {\bm u}_S={\bm u}-{\bm \sigma}u_R.
\label{uRuS}
\end{equation}

The operator
 \begin{equation}
 \partial_{r}:={\bm \sigma}\cdot\nabla=\sum_{k=1}^N \frac{x_k}{|{\bm x}|}\frac{\partial}{\partial x_k}\quad\text{ resp. }\quad \partial_{r}':=\partial_{r}+\frac{N-1}{{r}}
 \label{der_r} 
 \end{equation}
denotes the radial derivative resp. its skew $L^2$ adjoint, in the sense that
 \[
 \int_{\mathbb{R}^N}f\partial_{r}g\:\!dx=-\int_{\mathbb{R}^N}g\partial_{r}'f\:\! dx
 \]
 holds for all $f,g\in C_c^\infty(\dot{\mathbb{R}}^N)$. 
 When these operators act on vector fields ${\bm u}$, the operation is componentwise; 
it is then easy to check from \eqref{r-sigma} that 
 \begin{equation}
  \partial_{r}{r}=1\quad\text{ and }\quad \partial_{r}{\bm \sigma}= {\bm 0},
\end{equation}
and that the radial derivative commutes with the RS decomp. of vector fields. 


The operator $\nabla_{\!\sigma}$ is defined by 
\[
 \nabla_{\!\sigma}f:=\({r}\nabla f\)_S\quad\ \forall f\in C^\infty(\dot{\mathbb{R}}^N),
\]
which we call the spherical gradient on $\dot{\mathbb{R}}^N$. It is easy to check that 
$\nabla_{\!\sigma}$ commutes with $\partial_{r}$ as well as ${r}$. 
Applying \eqref{uRuS} to ${\bm u}=\nabla f$ yields
\begin{equation}
 \(\nabla f\)_R=\partial_{r}f\quad\text{ and }\quad \frac{1}{{r}}\nabla_{\!\sigma}f=\nabla f-{\bm \sigma}\partial_{r}f
,
\label{nab_s}
\end{equation}
and hence we may simply write as 
\begin{equation}
 \label{nabla}
   \nabla ={\bm \sigma}\partial_{r}+\frac{1}{{r}}\nabla_{\!\sigma}
\end{equation}
which can be understood as a RS decomp. of $\nabla$.

The standard Laplacian $\triangle=\sum_{k=1}^N \frac{\partial^2}{\partial x_k^2}$ is known to have the  expression
\begin{equation} 
\triangle=\partial_{r}'\partial_{r}+\frac{1}{{r}^2}\triangle_\sigma.\label{Lap}
\end{equation}
Here $\triangle_\sigma$ denotes the Laplace-Beltrami operator on $\mathbb{S}^{N-1}$, which we also call the spherical Laplacian. 
As is well known, the operators $\nabla_{\!\sigma}$ and $\triangle_\sigma$ can be expressed explicitly in terms of angular coordinates 
of the spherical polar coordinate system in $\dot{\mathbb{R}}^N$;   
for details, see e.g. \cite[Chapter 2]{Efthimiou-Frye} and also \cite[Section 2]{CF_MAAN}. 

Let us recall that ${\rm div}\:\!{\bm u}=\nabla\cdot {\bm u}=\sum_{k=1}^N \frac{\partial u_k}{\partial x_k}$ is the trace of the matrix field $\nabla {\bm u}$. By extracting its spherical part, we define 
\[\begin{split}
  \nabla_{\!\sigma}\cdot{\bm u}&:={\rm trace}\(\nabla_{\!\sigma}{\bm u}\)={r}\,{\rm trace}\(\nabla {\bm u}-{\bm \sigma}\partial_{r}{\bm u}\)\\&\;={r}\(\nabla\cdot {\bm u}-{\bm \sigma}\cdot\partial_{r}{\bm u}\)
={r}\:\!{\rm div}\:\!{\bm u}-{r}\:\!\partial_{r}u_R.
\end{split}
\]
Here the second equality follows by applying the second equation of \eqref{nab_s}; the operation of $\nabla_{\!\sigma}$ on vector fields is again componentwise (in the sense of \S\ref{subsec:1.1}). 
Then a direct calculation yields 
\[
\nabla_{\!\sigma}\cdot{\bm \sigma}={r}\:\!{\rm div}{\bm \sigma}={\rm div}\(r {\bm \sigma}\)-{\bm \sigma}\cdot\nabla {r}={\rm div}\:\!{\bm x}-1=N-1,\]
which deduces the identity
\begin{equation}
 {\rm div}\:\!{\bm u}
  =\partial_{r}'u_R+\frac{1}{{r}}\nabla_{\!\sigma}\cdot{\bm u}_S
\label{div_RS} 
\end{equation}
as a RS decomp. of the divergence operator. 
By applying the identity to a gradient field $\nabla f={\bm \sigma}\partial_{r}f+\frac{1}{{r}}\nabla_{\!\sigma}f$, 
it directly follows from \eqref{Lap} that
\[\nabla_{\!\sigma}\cdot\nabla_{\!\sigma}f=\triangle_\sigma f 
\qquad\forall f\in C^\infty(\dot{\mathbb{R}}^N)
\]  
as an analogue to the identity $\nabla\cdot\nabla f=\triangle f$. 

For later use, we show the following two lemmas
:
\begin{lemma}[Spherical integration by parts formula]
\label{lemma:div} 
\[\begin{split}
   &\int_{\mathbb{S}^{N-1}}{\bm u}\cdot\nabla_{\!\sigma}f\,\mathrm{d}\sigma=-\int_{\mathbb{S}^{N-1}}(\nabla_{\!\sigma}\cdot {\bm u}_S)f\,\mathrm{d}\sigma
 \end{split}
\]
holds for all ${\bm u}\in C^\infty(\dot{\mathbb{R}}^N)^N$ and $f\in C^\infty(\dot{\mathbb{R}}^N)$, 
where the integrals are taken for any fixed radius. In particular, $\displaystyle\int_{\mathbb{S}^{N-1}}|\nabla_{\!\sigma}f|^2\mathrm{d}\sigma=-\int_{\mathbb{S}^{N-1}}f\triangle_\sigma f\,\mathrm{d}\sigma$.
\end{lemma}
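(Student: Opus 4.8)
The plan is to reduce the identity to a \emph{spherical Leibniz rule} together with the vanishing of the integral of a spherical divergence over the closed manifold $\mathbb{S}^{N-1}$. First I would observe that, since $\nabla_{\!\sigma}f$ is tangent to the sphere (i.e. ${\bm\sigma}\cdot\nabla_{\!\sigma}f=0$, as is immediate from the defining relation $\nabla_{\!\sigma}f=(r\nabla f)_S$), only the spherical part of ${\bm u}$ contributes to the integrand: ${\bm u}\cdot\nabla_{\!\sigma}f={\bm u}_S\cdot\nabla_{\!\sigma}f$. Thus it suffices to integrate by parts the purely spherical pairing ${\bm u}_S\cdot\nabla_{\!\sigma}f$.

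Next I would establish the product rule $\nabla_{\!\sigma}\cdot(f{\bm u}_S)=f\,\nabla_{\!\sigma}\cdot{\bm u}_S+{\bm u}_S\cdot\nabla_{\!\sigma}f$. This follows by applying the RS decomposition of the divergence \eqref{div_RS} to the purely spherical field $f{\bm u}_S$, whose radial component vanishes because ${\bm\sigma}\cdot{\bm u}_S=0$; this gives $\mathrm{div}(f{\bm u}_S)=\frac1r\nabla_{\!\sigma}\cdot(f{\bm u}_S)$. Comparing with the ordinary Leibniz rule $\mathrm{div}(f{\bm u}_S)=f\,\mathrm{div}\,{\bm u}_S+{\bm u}_S\cdot\nabla f$, in which $\mathrm{div}\,{\bm u}_S=\frac1r\nabla_{\!\sigma}\cdot{\bm u}_S$ (again by \eqref{div_RS}) and ${\bm u}_S\cdot\nabla f=\frac1r{\bm u}_S\cdot\nabla_{\!\sigma}f$ (by \eqref{nabla} together with ${\bm u}_S\perp{\bm\sigma}$), and then multiplying through by $r$, yields the rule.

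The crux is the vanishing $\int_{\mathbb{S}^{N-1}}\nabla_{\!\sigma}\cdot{\bm w}\,\mathrm{d}\sigma=0$ for every spherical field ${\bm w}$, which I would prove directly from the ambient divergence theorem rather than quoting the divergence theorem on a closed manifold, so as to stay within the framework of \S\ref{subsec:RS}. For a radial cutoff $\psi\in C_c^\infty(\mathbb{R}_+)$ the field ${\bm V}=\psi(r)\,{\bm w}\in C_c^\infty(\dot{\mathbb{R}}^N)^N$ is purely spherical, so \eqref{div_RS} gives $\mathrm{div}\,{\bm V}=\frac{\psi(r)}{r}\nabla_{\!\sigma}\cdot{\bm w}$, using that $\nabla_{\!\sigma}\psi={\bm 0}$ for a radial $\psi$. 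Integrating over $\mathbb{R}^N$ in radial-spherical coordinates and using $\int_{\mathbb{R}^N}\mathrm{div}\,{\bm V}\,dx=0$ produces $\int_0^\infty\psi(r)\,r^{N-2}\big(\int_{\mathbb{S}^{N-1}}\nabla_{\!\sigma}\cdot{\bm w}\,\mathrm{d}\sigma\big)\,dr=0$ for all such $\psi$, whence the inner spherical integral vanishes at every radius. Applying this to ${\bm w}=f{\bm u}_S$ and combining with the product rule proves the main identity. I expect this vanishing step to be the only genuine obstacle, the remainder being algebraic; some care is needed to ensure $f{\bm u}_S$ is truly smooth on $\dot{\mathbb{R}}^N$, which holds because ${\bm\sigma}={\bm x}/|{\bm x}|$, and hence ${\bm u}_S$, are smooth away from the origin.

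Finally, for the particular case I would take ${\bm u}=\nabla_{\!\sigma}f$, which is purely spherical, so that ${\bm u}_S={\bm u}=\nabla_{\!\sigma}f$. The main identity then reads $\int_{\mathbb{S}^{N-1}}|\nabla_{\!\sigma}f|^2\,\mathrm{d}\sigma=-\int_{\mathbb{S}^{N-1}}(\nabla_{\!\sigma}\cdot\nabla_{\!\sigma}f)\,f\,\mathrm{d}\sigma$, and invoking the already-established identity $\nabla_{\!\sigma}\cdot\nabla_{\!\sigma}f=\triangle_\sigma f$ gives the stated formula at once.
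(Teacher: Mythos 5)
Your proof is correct and takes essentially the same route as the paper's: both arguments transfer the spherical identity to an ambient integral over $\mathbb{R}^N$ by means of an arbitrary radial cutoff, integrate by parts there, convert ${\rm div}$ into $\nabla_{\!\sigma}\cdot$ via \eqref{div_RS}, and invoke the fundamental lemma of the calculus of variations to recover the identity at each fixed radius. The only difference is organizational: you factor the argument into a spherical Leibniz rule plus the vanishing of $\int_{\mathbb{S}^{N-1}}\nabla_{\!\sigma}\cdot{\bm w}\,\mathrm{d}\sigma$ for spherical ${\bm w}$, whereas the paper performs the same computation in a single integration by parts of $({\bm u}\cdot\nabla_{\!\sigma}f)\zeta={\bm u}_S\cdot\nabla(rf\zeta)$.
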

\begin{proof} 
Let $\zeta\in C_c^\infty(\dot{\mathbb{R}}^N)$ be a radially symmetric scalar field. Then integration by parts of \ $({\bm u}\cdot\nabla_{\!\sigma}f)\zeta ={\bm u}_S\cdot\nabla({r}f\zeta )$ \ yields
\[\begin{split}
   \int_{\mathbb{R}^N}({\bm u}\cdot\nabla_{\!\sigma}f)\zeta\:\!dx
   &=-\int_{\mathbb{R}^N}({\rm div}\:\!{\bm u}_S)\:\!{r}f\zeta\:\! dx
   =-\int_{\mathbb{R}^N}(\nabla_{\!\sigma}\cdot{\bm u}_S)f\zeta\:\! dx,
  \end{split}\]
where the last equality follows from \eqref{div_RS}.  
Since the choice of $\zeta$ is arbitrary in the radial coordinate, we get the desired formula,  
with the aid of the fundamental lemma of the calculus of variations. 
\end{proof}
\begin{lemma}
\label{lemma:comm}
\begin{equation}  
\left\{\begin{array}{ll}    \triangle_\sigma({\bm \sigma}f)={\bm \sigma}(\triangle_\sigma-N+1)f +2\:\!\nabla_{\!\sigma}f,    \vspace{0.5em} \\	  \triangle_\sigma\nabla_{\!\sigma}f=\nabla_{\!\sigma}\triangle_\sigma f+(N-3)\nabla_{\!\sigma}f-2\:\!{\bm \sigma}\triangle_\sigma f	\end{array}\right.\label{comm} 
\end{equation}
for all $f\in C^\infty(\dot{\mathbb{R}}^N)$. In particular, if $f$ belongs to an eigenvalue $\alpha$ of $-\triangle_\sigma,$ then
\[
\left\{\begin{array}{ll}    \triangle_\sigma({\bm \sigma}f)=-{\bm \sigma}(\alpha+N-1)f +2\:\!\nabla_{\!\sigma}f,    \vspace{0.5em} \\	  \triangle_\sigma\nabla_{\!\sigma}f=\(-\alpha +N-3\)\nabla_{\!\sigma} f+2\:\!\alpha\:\! {\bm \sigma} f.
\end{array}\right.
\]
\end{lemma}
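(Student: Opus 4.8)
The plan is to deduce both angular identities from the ambient Cartesian Laplacian $\triangle$, exploiting its radial–spherical splitting \eqref{Lap} together with the elementary fact that $\triangle$ commutes with $\nabla$. First I would observe that every operator appearing in \eqref{comm} — namely $\triangle_\sigma$, $\nabla_{\!\sigma}$, and multiplication by the unit field ${\bm\sigma}$ — acts only through angular derivatives, and hence commutes with $r$ and $\partial_{r}$. Since any smooth $f$ agrees on each sphere with the degree-zero homogeneous extension of its restriction, and since these operators see only that restriction, it suffices to establish \eqref{comm} under the normalization $\partial_{r}f\equiv0$, which kills all radial contributions. Under this normalization one has the convenient reductions $\nabla f=\frac1r\nabla_{\!\sigma}f$ and $\triangle f=\frac1{r^2}\triangle_\sigma f$, with $\nabla_{\!\sigma}f$ and $\triangle_\sigma f$ again homogeneous of degree zero.

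For the first identity I would compute $\triangle({\bm\sigma}f)$ in two ways and match them. On one hand, the componentwise Leibniz rule gives, for each $j$, $\triangle(\sigma_j f)=(\triangle\sigma_j)f+2\nabla\sigma_j\cdot\nabla f+\sigma_j\triangle f$; a short computation from $\sigma_j=x_j/r$ yields $\nabla\sigma_j=\frac1r(e_j-\sigma_j{\bm\sigma})$ (writing $e_j$ for the $j$-th coordinate vector), whence $\triangle\sigma_j=-\frac{N-1}{r^2}\sigma_j$ and, via \eqref{nab_s}, $\nabla\sigma_j\cdot\nabla f=\frac1{r^2}(\nabla_{\!\sigma}f)_j$. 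On the other hand, the splitting \eqref{Lap} gives $\triangle({\bm\sigma}f)=\partial_{r}'\partial_{r}({\bm\sigma}f)+\frac1{r^2}\triangle_\sigma({\bm\sigma}f)$, and since ${\bm\sigma}f$ is homogeneous of degree zero the radial term drops. Equating the two expressions and clearing the common factor $1/r^2$ produces exactly $\triangle_\sigma({\bm\sigma}f)={\bm\sigma}(\triangle_\sigma-N+1)f+2\nabla_{\!\sigma}f$.

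For the second identity I would apply $\triangle$ to $\nabla f=\frac1r\nabla_{\!\sigma}f$ and use $\triangle\nabla f=\nabla\triangle f$. Expanding the left side by the Leibniz rule for $\triangle$ acting on $\frac1r$ times the degree-zero vector field $\nabla_{\!\sigma}f$, and recording that $\triangle(1/r)=(3-N)/r^3$ while the cross terms vanish by homogeneity, I get $\triangle\nabla f=\frac1{r^3}\bigl[(3-N)\nabla_{\!\sigma}f+\triangle_\sigma\nabla_{\!\sigma}f\bigr]$. Expanding the right side from $\triangle f=\frac1{r^2}\triangle_\sigma f$ gives $\nabla\triangle f=\frac1{r^3}\bigl[\nabla_{\!\sigma}\triangle_\sigma f-2{\bm\sigma}\triangle_\sigma f\bigr]$. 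Matching the two and solving for $\triangle_\sigma\nabla_{\!\sigma}f$ yields the claimed relation $\triangle_\sigma\nabla_{\!\sigma}f=\nabla_{\!\sigma}\triangle_\sigma f+(N-3)\nabla_{\!\sigma}f-2{\bm\sigma}\triangle_\sigma f$.

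Finally, the ``in particular'' assertions follow by the single substitution $\triangle_\sigma f=-\alpha f$, using that $\alpha$ is a constant so $\nabla_{\!\sigma}(\alpha f)=\alpha\nabla_{\!\sigma}f$. The only real care needed is the bookkeeping of homogeneity degrees and the verification that the cross terms involving $\nabla(1/r)$ and $\nabla\sigma_j$ reduce to spherical gradients through \eqref{nab_s}; this is where a stray sign or factor is easiest to lose, so I expect that step — rather than any conceptual difficulty — to be the main obstacle.
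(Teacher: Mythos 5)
Your proof is correct, but note that the paper itself does not prove this lemma at all: it simply cites \cite[Lemma 7]{CF_MAAN} and \cite[Lemma 3]{CF_JFA}, so you have supplied a self-contained argument where the paper only gives a pointer. Your route is sound: since $\triangle_\sigma$, $\nabla_{\!\sigma}$ and multiplication by ${\bm\sigma}$ see only the restriction of $f$ to each sphere, replacing $f$ by the degree-zero homogeneous extension of its restriction to the sphere of any fixed radius $r_0$ (which is again smooth on $\dot{\mathbb{R}}^N$) is legitimate, and under $\partial_r f\equiv 0$ the splitting \eqref{Lap} reduces $\triangle$ to $r^{-2}\triangle_\sigma$ on every degree-zero field. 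I checked the key computations: $\nabla\sigma_j=\frac1r(e_j-\sigma_j{\bm\sigma})$, $\triangle\sigma_j=-\frac{N-1}{r^2}\sigma_j$, $\triangle(1/r)=(3-N)r^{-3}$, the vanishing of the cross terms via ${\bm\sigma}\cdot\nabla_{\!\sigma}=0$, and the commutation $\triangle\nabla f=\nabla\triangle f$; equating the two expansions indeed yields both identities of \eqref{comm}, and the eigenfunction case follows by substituting $\triangle_\sigma f=-\alpha f$. What your approach buys is transparency and independence from the curl-free-field literature: the whole lemma reduces to Euclidean Leibniz-rule bookkeeping with homogeneity, which is exactly in the spirit of the radial-spherical calculus the paper sets up in Section~\ref{sec:VC}, whereas the cited references treat the same commutators as standalone results. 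The one point worth stating explicitly (you gesture at it but should pin it down) is the justification that the identity for the homogeneous extension transfers back to general $f$: both sides of \eqref{comm} evaluated at radius $r_0$ depend only on $f|_{|{\bm x}|=r_0}$, which the extension reproduces, so the identity at radius $r_0$ for the extension is verbatim the identity for $f$ there; since $r_0>0$ is arbitrary, the lemma follows.
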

This fact gives commutation relations between $\triangle_\sigma$ and ${\bm \sigma}$ or $\nabla_{\!\sigma}$. For the proof of the lemma, see e.g. \cite[Lemma 7]{CF_MAAN} or \cite[Lemma 3]{CF_JFA}
\section{Poloidal-toroidal fields}
\label{sec:PT}
A precise definition of poloidal-toroidal fields in general dimension was introduced by Weck \cite{Weck} in the framework of differential forms on $\dot{\mathbb{R}}^N$, including advanced tools such as Hodge dual, interior product, codifferential, etc. However, such a framework requires too wide range of concepts to focus on our current issue. In order to solve this gap, we give a more specialized formalization to introduce the definition of PT fields,  in the framework of minimum required tools in the standard vector calculus. 

\subsection{Pre-poloidal fields and toroidal fields on $\dot{\mathbb{R}}^N$}
\label{subsec:prePT} 
We say that a vector field ${\bm u}=\bm{u}(\bm{x})$ on $\dot{\mathbb{R}}^N$ is {\it pre-poloidal} if there exist two scalar fields $f$ and $g$ satisfying
\begin{equation}
 {\bm u}={\bm x}g+\nabla f\quad\text{ on }\dot{\mathbb{R}}^N.
\label{def_P}
\end{equation}
This definition is equivalent to the existence of  $f,g$ satisfying
\begin{equation}
 {\bm u}={\bm \sigma}g+\nabla_{\!\sigma}f\quad\text{ on }\dot{\mathbb{R}}^N.
\label{Def_P}
\end{equation}
Then the set of all pre-poloidal fields, which we denote by $\mathcal{P}(\dot{\mathbb{R}}^N)$, is a linear space with the invariance property
\begin{equation}
 \left\{\zeta \:\!{\bm u},\  \partial_{r}{\bm u},
\ \triangle_\sigma {\bm u}\right\}\subset\mathcal{P}(\dot{\mathbb{R}}^N)
  \qquad\forall {\bm u}\in\mathcal{P}(\dot{\mathbb{R}}^N),
 \label{inv_P} 
\end{equation}
where $\zeta\in C^\infty(\dot{\mathbb{R}}^N)$ is any radially symmetric scalar field. 
Indeed,  for every ${\bm u}\in\mathcal{P}(\dot{\mathbb{R}}^N)$ the relation $\{\zeta {\bm u},\,\partial_{r}{\bm u}\}\subset\mathcal{P}(\dot{\mathbb{R}}^N)$ easily follows from the second definition formula \eqref{Def_P}, and taking the operation of $\triangle$ on  the first formula \eqref{def_P} readily yields $\triangle {\bm u}\in\mathcal{P}(\dot{\mathbb{R}}^N)$ from the Leibniz rule; 
an easy application of this result also yields $\triangle_\sigma {\bm u}={r}^2\triangle {\bm u}-{r}^2\partial_{r}'\partial_{r}{\bm u}\in\mathcal{P}(\dot{\mathbb{R}}^N)$ from \eqref{Lap}, whence we arrive at \eqref{inv_P}.

A vector field ${\bm u}\in C^\infty(\dot{\mathbb{R}}^N)^N$ is said to be {\it toroidal} if it is spherical and solenoidal
:
\begin{equation}
\begin{array}{rll}
 & {\bm x}\cdot {\bm u}={\rm div}\:\!{\bm u}=0
\vspace{0.25em}
\\
\text{or equivalently} 
&u_R=\nabla_{\!\sigma}\cdot{\bm u}=0
\end{array}
\quad \text{ on }\dot{\mathbb{R}}^N.
\label{def_T}
\end{equation}
We denote by $\mathcal{T}(\dot{\mathbb{R}}^N)$ the set of all toroidal fields. 
Then the same invariance property \eqref{inv_P} also applies to $\mathcal{T}(\dot{\mathbb{R}}^N)$. 
To give an example, let $i<j$ be two integers between $1$ and $N$, and let ${\bm v}^{(i,j)}$ 
be a vector field with the $k$-th component  given by
\begin{equation}
 v^{(i,j)}_k({\bm x})=
  \left\{\begin{array}{cl}
   -x_j   &\text{if }k=i
    \\
	  x_i	&\text{if }k=j
	   \\
	 0&\text{otherwise}	     \end{array}\right.
\label{ex_T}
\end{equation}
for every $k=1,2,\cdots,N$; then it is easy to check that ${\bm v}^{(i,j)}\in\mathcal{T}(\dot{\mathbb{R}}^N)$.

Including the above argument, we summarize some principal properties of the spaces of pre-poloidal fields and toroidal fields: 
\begin{prop}
\label{prop:pre}
All pre-poloidal fields are $L^2(\mathbb{S}^{N-1})$-orthogonal to all toroidal fields, in the sense that
 \begin{equation}
  \int_{\mathbb{S}^{N-1}}{\bm v}\cdot {\bm w}\,\mathrm{d}\sigma=\int_{\mathbb{S}^{N-1}}\nabla \bm{v}\cdot\nabla \bm{w}\,\mathrm{d}\sigma=0
\end{equation}
for all ${\bm v}\in\mathcal{P}(\dot{\mathbb{R}}^N)$ and ${\bm w}\in\mathcal{T}(\dot{\mathbb{R}}^N)$, where the integrals are taken for any radius. Moreover, the two kinds of fields always satisfy
 \[  \left\{\zeta {\bm v},\, \partial_{r}{\bm v},\,\triangle_\sigma {\bm v}\right\}\subset \mathcal{P}(\dot{\mathbb{R}}^N)\quad \text{ and }\quad\left\{\zeta {\bm w},\, \partial_{r}{\bm w},\,\triangle_\sigma {\bm w}\right\}\subset \mathcal{T}(\dot{\mathbb{R}}^N),
\]
where $\zeta\in C^\infty(\dot{\mathbb{R}}^N)$ is any radially symmetric scalar field; namely, the two spaces $\mathcal{P}(\dot{\mathbb{R}}^N)$ and $\mathcal{T}(\dot{\mathbb{R}}^N)$ are invariant under the the multiplication of $\zeta$ and the operations of  $\partial_{r}$ and $\triangle_\sigma$.
\end{prop}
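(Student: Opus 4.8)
The plan is to establish the two invariance statements first and then to read off both orthogonality identities from them together with Lemma~\ref{lemma:div}. The invariance properties for $\mathcal{P}(\dot{\mathbb{R}}^N)$ are already recorded in \eqref{inv_P}, so only $\mathcal{T}(\dot{\mathbb{R}}^N)$ needs attention. Fixing ${\bm w}\in\mathcal{T}(\dot{\mathbb{R}}^N)$, so that $w_R=\nabla_{\!\sigma}\cdot{\bm w}=0$ by \eqref{def_T}, I note that multiplication by a radial $\zeta$ and the action of $\partial_{r}$ preserve these two conditions at once: $\zeta$ commutes with ${\bm\sigma}\cdot$ and with $\nabla_{\!\sigma}\cdot$, while $\partial_{r}$ commutes both with the RS decomposition and with $\nabla_{\!\sigma}$. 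The delicate case is $\triangle_\sigma$; here I would first check that the full Laplacian $\triangle$ maps $\mathcal{T}(\dot{\mathbb{R}}^N)$ into itself, since $\mathrm{div}\,\triangle{\bm w}=\triangle\,\mathrm{div}\,{\bm w}=0$ and the Leibniz identity $\triangle({\bm x}\cdot{\bm w})={\bm x}\cdot\triangle{\bm w}+2\,\mathrm{div}\,{\bm w}$ forces ${\bm x}\cdot\triangle{\bm w}=0$ as well. Writing $\triangle_\sigma{\bm w}=r^2\triangle{\bm w}-r^2\partial_{r}'\partial_{r}{\bm w}$ exactly as in the derivation of \eqref{inv_P}, and using that $\partial_{r}$, $\partial_{r}'$ and multiplication by $r^2$ all preserve $\mathcal{T}(\dot{\mathbb{R}}^N)$, I conclude $\triangle_\sigma{\bm w}\in\mathcal{T}(\dot{\mathbb{R}}^N)$.

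For the $L^2(\mathbb{S}^{N-1})$-orthogonality I would write ${\bm v}={\bm\sigma}g+\nabla_{\!\sigma}f$ by \eqref{Def_P}. Since ${\bm\sigma}\cdot{\bm w}=w_R=0$, the radial term drops out and ${\bm v}\cdot{\bm w}={\bm w}\cdot\nabla_{\!\sigma}f$; Lemma~\ref{lemma:div} applied with ${\bm u}={\bm w}$ (so that ${\bm w}_S={\bm w}$) then gives $\int_{\mathbb{S}^{N-1}}{\bm w}\cdot\nabla_{\!\sigma}f\,\mathrm{d}\sigma=-\int_{\mathbb{S}^{N-1}}(\nabla_{\!\sigma}\cdot{\bm w})f\,\mathrm{d}\sigma=0$.

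The gradient-orthogonality is the main point. Inserting the RS decomposition \eqref{nabla} of $\nabla$ into each component yields the pointwise identity $\nabla{\bm v}\cdot\nabla{\bm w}=\partial_{r}{\bm v}\cdot\partial_{r}{\bm w}+r^{-2}\sum_j\nabla_{\!\sigma}v_j\cdot\nabla_{\!\sigma}w_j$, the cross terms disappearing because each $\nabla_{\!\sigma}v_j$ is spherical (orthogonal to ${\bm\sigma}$) while $\sum_k\sigma_k^2=1$. Integrating over the sphere, the radial contribution $\int_{\mathbb{S}^{N-1}}\partial_{r}{\bm v}\cdot\partial_{r}{\bm w}\,\mathrm{d}\sigma$ vanishes by the $L^2$-orthogonality just proven, applied to $\partial_{r}{\bm v}\in\mathcal{P}(\dot{\mathbb{R}}^N)$ and $\partial_{r}{\bm w}\in\mathcal{T}(\dot{\mathbb{R}}^N)$. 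For the spherical contribution, a componentwise use of Lemma~\ref{lemma:div} with ${\bm u}=\nabla_{\!\sigma}v_j$, $f=w_j$ and $\nabla_{\!\sigma}\cdot\nabla_{\!\sigma}=\triangle_\sigma$ rewrites it as $-\int_{\mathbb{S}^{N-1}}\triangle_\sigma{\bm v}\cdot{\bm w}\,\mathrm{d}\sigma$, which vanishes again by the same $L^2$-orthogonality, now because $\triangle_\sigma{\bm v}\in\mathcal{P}(\dot{\mathbb{R}}^N)$.

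The step I expect to be the main obstacle is the $\triangle_\sigma$-invariance of $\mathcal{T}(\dot{\mathbb{R}}^N)$: unlike $\partial_{r}$, the spherical Laplacian does not commute with the RS decomposition --- the relevant commutators being precisely those in Lemma~\ref{lemma:comm} --- so one cannot argue directly on $w_R$ and $\nabla_{\!\sigma}\cdot{\bm w}$, and it is the detour through the full Laplacian together with the identity $\triangle_\sigma=r^2(\triangle-\partial_{r}'\partial_{r})$ that makes the argument close.
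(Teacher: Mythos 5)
Your proof is correct and follows essentially the same route as the paper: the scalar orthogonality via $w_R=0$ plus Lemma~\ref{lemma:div}, then the gradient orthogonality via the RS decomposition of $\nabla$, one spherical integration by parts, and the invariance of $\mathcal{P}(\dot{\mathbb{R}}^N)$ and $\mathcal{T}(\dot{\mathbb{R}}^N)$ under $\partial_{r}$ and $\triangle_\sigma$. The only cosmetic differences are that you integrate by parts onto $\triangle_\sigma{\bm v}\in\mathcal{P}(\dot{\mathbb{R}}^N)$ where the paper instead moves $\triangle_\sigma$ onto ${\bm w}\in\mathcal{T}(\dot{\mathbb{R}}^N)$ (a symmetric, equally valid choice), and that you spell out the toroidal invariance---via the full Laplacian and $\triangle_\sigma=r^2(\triangle-\partial_{r}'\partial_{r})$, mirroring the paper's own argument for \eqref{inv_P}---which the paper merely asserts.
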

\begin{proof}
It suffices to check the orthogonality. The pre-poloidal property of ${\bm v}$ says that ${\bm v}={\bm \sigma}g+\nabla_{\!\sigma} f$ for some $f$ and $g$, and hence
\[
 {\bm v}\cdot {\bm w}={\bm w}\cdot\nabla_{\!\sigma} f
\]
follows from the spherical property $w_R=0$ of the toroidal field ${\bm w}$.  Then integration by parts of both sides using ${r}\:\!{\rm div}{\bm w}=\nabla_{\!\sigma}\cdot{\bm w}=0$ yields
\[
 \int_{\mathbb{S}^{N-1}}{\bm v}\cdot {\bm w}\:\!\mathrm{d}\sigma=-\int_{\mathbb{S}^{N-1}}(\nabla_{\!\sigma}\cdot{\bm w})\:\!f\:\!\mathrm{d}\sigma
=0.
\] 
This proves the first orthogonality formula. To prove the second, by using \eqref{nabla} and Lemma~\ref{lemma:div},  integration by parts yields
\[
\begin{split}
 \int_{\mathbb{S}^{N-1}}\nabla \bm{v}\cdot\nabla \bm{w}\,\mathrm{d}\sigma
 &=\int_{\mathbb{S}^{N-1}}\Big(\partial_{r}\bm{v}\cdot\partial_{r}\bm{w}+{r}^{-2}\nabla_{\!\sigma} \bm{v}\cdot\nabla_{\!\sigma} \bm{w}\Big)\mathrm{d}\sigma
 \\&=\int_{\mathbb{S}^{N-1}}\partial_{r}{\bm v}\cdot\partial_{r}{\bm w}\,\mathrm{d}\sigma-{r}^{-2}\int_{\mathbb{S}^{N-1}}{\bm v}\cdot(\triangle_\sigma {\bm w})\,\mathrm{d}\sigma=0, 
\end{split} 
\]
where the last equality follows by applying the first orthogonality formula to the fields $\{\partial_{r}{\bm v},{\bm v}\}\subset\mathcal{P}(\dot{\mathbb{R}}^N)$ and $\{\partial_{r}{\bm w},\triangle_\sigma {\bm w}\}\subset\mathcal{T}(\dot{\mathbb{R}}^N)$. 
\end{proof}
As an application of Proposition \ref{prop:pre}, we deduce the following simple but important fact: 
\begin{corollary}[Zero-spherical-mean property of toroidal fields]
\label{corollary}
For every ${\bm u}\in\mathcal{T}(\dot{\mathbb{R}}^N)$, it holds that
\begin{equation}
  \int_{\mathbb{S}^{N-1}}{\bm u}({r}{\bm \sigma})\:\!\mathrm{d}\sigma=0\qquad\forall{r}>0.
\end{equation}
\end{corollary}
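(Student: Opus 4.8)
The plan is to reduce the vanishing of the vector-valued spherical mean to a componentwise statement and then recognise each component as an $L^2(\mathbb{S}^{N-1})$ pairing between a toroidal field and a pre-poloidal one. Fixing $r>0$ and writing $\bm{e}_1,\dots,\bm{e}_N$ for the standard basis of $\mathbb{R}^N$, the $k$-th Cartesian component of the mean is
\[
\left(\int_{\mathbb{S}^{N-1}}\bm{u}(r\bm{\sigma})\,\mathrm{d}\sigma\right)_k=\int_{\mathbb{S}^{N-1}}\bm{e}_k\cdot\bm{u}(r\bm{\sigma})\,\mathrm{d}\sigma,
\]
so it suffices to show that this scalar integral vanishes for each $k$.

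First I would observe that each constant field $\bm{e}_k$ is pre-poloidal on $\dot{\mathbb{R}}^N$: indeed $\bm{e}_k=\nabla x_k$, which matches the defining form \eqref{def_P} with $g\equiv 0$ and $f=x_k$, so $\bm{e}_k\in\mathcal{P}(\dot{\mathbb{R}}^N)$. With this identification the integral above is exactly the $L^2(\mathbb{S}^{N-1})$ inner product of the pre-poloidal field $\bm{e}_k$ with the toroidal field $\bm{u}$. Applying the first orthogonality relation of Proposition~\ref{prop:pre} (with $\bm{v}=\bm{e}_k$ and $\bm{w}=\bm{u}$, at the fixed radius $r$) therefore gives $\int_{\mathbb{S}^{N-1}}\bm{e}_k\cdot\bm{u}(r\bm{\sigma})\,\mathrm{d}\sigma=0$.

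Since this holds for every $k=1,\dots,N$ and every $r>0$, all Cartesian components of the spherical mean vanish, which is the claim. There is essentially no technical obstacle here: the only real step is the recognition that constant vector fields arise as gradients of the linear coordinate functions and hence fall under the pre-poloidal class, after which the corollary follows immediately from the orthogonality already established in Proposition~\ref{prop:pre}. The efficiency of this argument is precisely the payoff of having phrased the poloidal/toroidal splitting through $L^2(\mathbb{S}^{N-1})$ orthogonality rather than through explicit scalar potentials, which is exactly the device the introduction flags as the substitute for the cross-product techniques unavailable in general $\mathbb{R}^N$.
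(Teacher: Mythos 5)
Your proof is correct and is essentially identical to the paper's own argument: the paper likewise notes that $\bm{e}_k=\nabla x_k\in\mathcal{P}(\dot{\mathbb{R}}^N)$ and applies the first orthogonality relation of Proposition~\ref{prop:pre} to the pair $(\bm{e}_k,\bm{u})\in\mathcal{P}(\dot{\mathbb{R}}^N)\times\mathcal{T}(\dot{\mathbb{R}}^N)$ componentwise.
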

\begin{proof}
For every $k\in\{1,2,\cdots,N\}$, let ${\bm e}_k=\nabla x_k$ denote the constant unit vector field parallel to the $x_k$-axis. Then it is clear that ${\bm e}_k\in\mathcal{P}(\dot{\mathbb{R}}^N)$. Therefore, the integration of the $k$-th component $u_k={\bm e}_k\cdot {\bm u}$ of ${\bm u}={\bm u}({r}{\bm \sigma})$ yields 
\[
\int_{\mathbb{S}^{N-1}}u_k\:\!\mathrm{d}\sigma= \int_{\mathbb{S}^{N-1}}{\bm e}_k\cdot {\bm u}\,\mathrm{d}\sigma=0
\]
 by applying Proposition \ref{prop:pre} to the case  $({\bm e}_k,{\bm u})\in\mathcal{P}(\dot{\mathbb{R}}^N)\times\mathcal{T}(\dot{\mathbb{R}}^N)$. Since this equation holds for all $k\in\{1,2,\cdots,N\}$, we arrive at the desired result.
\end{proof}
\subsection{PT  decomposition of solenoidal fields on $\mathbb{R}^N$} 
\label{subsec:PT}
While all toroidal fields are solenoidal, pre-poloidal fields are not necessarily so; we say that a pre-poloidal field is {\it poloidal} whenever it is solenoidal. 

Now let ${\bm u}$ be a solenoidal field smoothly defined on the whole space $\mathbb{R}^N$. 
Notice from the Gauss' divergence theorem that the surface integral of ${\bm u}$ over $\mathbb{S}^{N-1}$ gives 
$
 \int_{\mathbb{S}^{N-1}}{\bm \sigma}\cdot {\bm u}\,\mathrm{d}\sigma=0$  for any radius. 
This equation says that the scalar field $u_R={\bm \sigma}\cdot {\bm u}$ has zero-spherical mean. Then the Poisson-Beltrami equation (equipped with zero-spherical-mean condition)%
\begin{equation}
\left\{\begin{array}{l}
 \triangle_\sigma f=u_R\quad\text{ on }\dot{\mathbb{R}}^N
\\[0.5em]
\int_{\mathbb{S}^{N-1}}f\,\mathrm{d}\sigma=0
\end{array}\right.
\end{equation}
is known to have an unique solution $f,$ which we can express as
\[
f=\triangle_\sigma^{-1}u_R=-\sum_{k=1}^\infty \frac{1}{\alpha_\nu}u_{R,\nu}
,
\]
where $\{u_{R,\nu}\}_{\nu=1}^\infty $ are the spherical harmonics components of $u_R$ defined by the equations
\[
u_R=\sum_{\nu=1}^\infty u_{R,\nu},
\qquad 
-\triangle_\sigma u_{R,\nu}=\alpha_\nu u_{R,\nu} \quad  (\forall \nu\in\mathbb{N}),
\]
see also \cite[Eq. (13)]{Backus} or \cite[Lemma 3]{Weck}. The solution $f$ is called the {\it poloidal potential} of ${\bm u}$.  To understand this naming, let us introduce the second-order derivative operator
\begin{equation}
  {\bm D}:={\bm \sigma}\triangle_\sigma -{r}\partial_{r}'\nabla_{\!\sigma},
\end{equation}
which  we call the {\it poloidal-field generator}.
It maps every scalar field $f$ to a poloidal field
; 
indeed, it is clear that ${\bm D}f\in\mathcal{P}(\dot{\mathbb{R}}^N)$, and that
\[
{\rm div}{\bm D}f=\partial_{r}'\triangle_\sigma f-\partial_{r}'\nabla_{\!\sigma}\cdot\nabla_{\!\sigma}f=0
\]
follows from \eqref{div_RS}
. Moreover, it is easy to check that the vector field
 \[{\bm u}-{\bm D}\triangle_\sigma^{-1}u_R={\bm u}_S+\nabla_{\!\sigma}\triangle_\sigma^{-1}({r}\partial_{r}'u_R)\]
is toroidal  whenever ${\bm u}$ is solenoidal. Hence we have obtained the following fact:
\begin{prop}[PT theorem]
 \label{prop:PT}
 Let ${\bm u}\in C^\infty(\mathbb{R}^N)^N$ be a solenoidal field.  Then there exists an unique pair of poloidal-toroidal fields $({\bm u}_P,{\bm u}_T)\in\mathcal{P}(\dot{\mathbb{R}}^N)\times\mathcal{T}(\dot{\mathbb{R}}^N)$ satisfying
\[{\bm u}={\bm u}_P+{\bm u}_T\qquad{\rm on }\ \dot{\mathbb{R}}^N.\]
 Here the poloidal part has the explicit expression ${\bm u}_P={\bm D}f$ in terms of the poloidal-field generator ${\bm D}$ acting on the poloidal potential $f=\triangle_\sigma^{-1}u_R$.
\end{prop}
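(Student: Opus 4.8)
The plan is to establish existence by the explicit construction already prepared in the discussion preceding the statement, and to obtain uniqueness for free from the $L^2(\mathbb{S}^{N-1})$-orthogonality of Proposition~\ref{prop:pre}. For existence, given a solenoidal $\bm{u}\in C^\infty(\mathbb{R}^N)^N$, I would first invoke Gauss' divergence theorem to record that the radial component $u_R=\bm{\sigma}\cdot\bm{u}$ has zero spherical mean on every sphere. This makes the poloidal potential $f=\triangle_\sigma^{-1}u_R$ well defined through the spherical-harmonic expansion, the eigenvalues $\alpha_\nu$ being strictly positive once the constant mode is removed. I would then set $\bm{u}_P:=\bm{D}f$ and $\bm{u}_T:=\bm{u}-\bm{u}_P$.

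The next step is to verify the membership of this pair. That $\bm{u}_P\in\mathcal{P}(\dot{\mathbb{R}}^N)$ and $\mathrm{div}\,\bm{u}_P=0$ is exactly what the computation just above the statement records, so $\bm{u}_P$ is poloidal. To see that $\bm{u}_T$ is toroidal, I would extract the radial part of $\bm{u}_P$: since $\bm{D}=\bm{\sigma}\triangle_\sigma-r\partial_r'\nabla_{\!\sigma}$ and the $\nabla_{\!\sigma}$-term is purely spherical while $\partial_r'$ and multiplication by a radial scalar preserve the spherical property, the relation $\triangle_\sigma f=u_R$ yields $(\bm{u}_P)_R=\triangle_\sigma f=u_R$. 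Hence $\bm{u}_T$ has vanishing radial part, and being the difference of two solenoidal fields it is solenoidal; by the defining conditions \eqref{def_T} it is therefore toroidal. This gives the decomposition together with the stated formula $\bm{u}_P=\bm{D}f$.

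For uniqueness, suppose $\bm{u}=\bm{u}_P+\bm{u}_T=\bm{u}_P'+\bm{u}_T'$ with both summands in the respective spaces. The difference $\bm{w}:=\bm{u}_P-\bm{u}_P'=\bm{u}_T'-\bm{u}_T$ then lies simultaneously in $\mathcal{P}(\dot{\mathbb{R}}^N)$ and $\mathcal{T}(\dot{\mathbb{R}}^N)$. Applying the orthogonality of Proposition~\ref{prop:pre} with $\bm{v}=\bm{w}$ and the same $\bm{w}$ gives $\int_{\mathbb{S}^{N-1}}|\bm{w}|^2\,\mathrm{d}\sigma=0$ for every radius, so $\bm{w}\equiv\bm{0}$. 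Alternatively, I could argue directly: a pre-poloidal field with $w_R=0$ reduces to $\nabla_{\!\sigma}\varphi$, and solenoidality forces $\triangle_\sigma\varphi=0$ on each sphere via \eqref{div_RS}, so $\varphi$ is radial and $\bm{w}=\nabla_{\!\sigma}\varphi=\bm{0}$.

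The main obstacle I anticipate is the analytic justification of $f=\triangle_\sigma^{-1}u_R$ as a genuine smooth field on $\dot{\mathbb{R}}^N$: one must control the convergence and termwise differentiability of the eigenfunction series defining $\triangle_\sigma^{-1}$, and confirm that $\bm{u}_P=\bm{D}f$ inherits the smoothness of $\bm{u}$ away from the origin. The $r$-regularity follows because $\triangle_\sigma^{-1}$ commutes with $\partial_r$, while the spherical regularity follows from elliptic regularity on the compact manifold $\mathbb{S}^{N-1}$; the algebraic identities of Section~\ref{sec:VC} (the radial part of $\bm{D}f$ and solenoidality) are routine by comparison, so the genuine care lies entirely in the regularity of the inverse spherical Laplacian.
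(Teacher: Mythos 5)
Your proof is correct and follows essentially the same route as the paper: the paper likewise uses Gauss' theorem to get the zero-spherical-mean of $u_R$, constructs $\bm{u}_P=\bm{D}\triangle_\sigma^{-1}u_R$ via the poloidal-field generator, and observes that the remainder $\bm{u}-\bm{u}_P$ is spherical and solenoidal, hence toroidal. Your explicit uniqueness argument via the $L^2(\mathbb{S}^{N-1})$-orthogonality of Proposition~\ref{prop:pre} (and your flagging of the regularity of $\triangle_\sigma^{-1}u_R$) merely spells out details the paper leaves implicit, deferring to \cite{Backus} and \cite{Weck}.
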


\section{Proof of main theorem}
\label{sec:main}
In order to compute the best H-L constant (namely the best constant in the inequality \eqref{H-L})   for solenoidal fields ${\bm u}\in\mathcal{D}_\gamma(\mathbb{R}^N)$, we can assume that ${\bm u}\in C_c^\infty(\dot{\mathbb{R}}^N)^N$ by the density argument, and we will proceed from \S\ref{subsec:red_PT} to \S\ref{subsec:HL_tor} under this assumption. 

\subsection{Reduction to the case of PT fields} 
\label{subsec:red_PT}
Let ${\bm u}\in C_c^\infty(\dot{\mathbb{R}}^N)^N$ be a solenoidal field, and let ${\bm u}={\bm u}_P+{\bm u}_T$ be its PT decomposition  in the sense of  Proposition \ref{prop:PT}.  
If ${\bm u}_P\not\equiv {\bm 0}$ and ${\bm u}_T\not\equiv {\bm 0}$, then the ratio of the two integrals in inequality \eqref{H-L}, which we simply call the  H-L quotient, has the following estimate:
\begin{align}
 \frac{\int_{\mathbb{R}^N}|\nabla {\bm u}|^2|{\bm x}|^{2\gamma}dx}{\int_{\mathbb{R}^N}|{\bm u}|^2|{\bm x}|^{2\gamma-2}dx}
 &=\frac{\int_{\mathbb{R}^N}|\nabla {\bm u}_P|^2|{\bm x}|^{2\gamma}dx+\int_{\mathbb{R}^N}|\nabla {\bm u}_T|^2|{\bm x}|^{2\gamma}dx}{\int_{\mathbb{R}^N}|{\bm u}_P|^2|{\bm x}|^{2\gamma-2}dx+\int_{\mathbb{R}^N}|{\bm u}_T|^2|{\bm x}|^{2\gamma-2}dx}
\\&\ge \min\left\{\frac{\int_{\mathbb{R}^N}|\nabla {\bm u}_P|^2|{\bm x}|^{2\gamma}dx}{\int_{\mathbb{R}^N}|{\bm u}_P|^2|{\bm x}|^{2\gamma-2}dx},\ \frac{\int_{\mathbb{R}^N}|\nabla {\bm u}_T|^2|{\bm x}|^{2\gamma}dx}{\int_{\mathbb{R}^N}|{\bm u}_T|^2|{\bm x}|^{2\gamma-2}dx}\right\}%
\\&\ge\min\left\{C^{\rm pol}_{N,\gamma},\, C^{\rm tor}_{N,\gamma}\right\},
\label{ratio_HL}
\end{align}
where the first equality follows by applying the $L^2(\mathbb{S}^{N-1})$-orthogonality in Proposition \ref{prop:pre}, and where $C^{\rm pol}_{N,\gamma}$ resp. $C^{\rm tor}_{N,\gamma}$ are the best H-L constant for poloidal resp. toroidal fields:
\begin{equation}
\begin{array}{r}\displaystyle
  C^{\rm pol}_{N,\gamma}
=\inf_{{\bm u}\in\mathcal{P}}\frac{\int_{\mathbb{R}^N}|\nabla{\bm u}|^2|{\bm x}|^{2\gamma}dx}{\int_{\mathbb{R}^N}|{\bm u}|^2|{\bm x}|^{2\gamma-2}dx},
\quad\ 
C^{\rm tor}_{N,\gamma}=
\inf_{{\bm u}\in\mathcal{T}}\frac{\int_{\mathbb{R}^N}|\nabla {\bm u}|^2|{\bm x}|^{2\gamma}dx}{\int_{\mathbb{R}^N}|{\bm u}|^2|{\bm x}|^{2\gamma-2}dx}.
\end{array}
\end{equation}
Here the abbreviation ${\bm u}\in\mathcal{P}$ resp. ${\bm u}\in\mathcal{T}$ under the infimum sign means that ${\bm u}$ 
 runs over all poloidal resp. toroidal fields in $C_c^\infty(\dot{\mathbb{R}}^N)^N\setminus\{{\bm 0}\}$. 
By taking the infimum on both sides of \eqref{ratio_HL} over the test solenoidal fields ${\bm u}$, the best H-L constant $C_{N,\gamma}$ for solenoidal fields is found to satisfy
\begin{equation}
 C_{N,\gamma}= \inf_{\substack{{\bm u}\not\equiv {\bm 0},\\ {\rm div}{\bm u}=0}} \frac{\int_{\mathbb{R}^N}|\nabla {\bm u}|^2|{\bm x}|^{2\gamma}dx}{\int_{\mathbb{R}^N}|{\bm u}|^2|{\bm x}|^{2\gamma-2}dx}\ge \min\left\{C^{\rm pol}_{N,\gamma},\,C^{\rm tor}_{N,\gamma}\right\}.
\end{equation}
Since the reverse inequality $C_{N,\gamma}\le\min\{C^{\rm pol}_{N,\gamma},\, C^{\rm tor}_{N,\gamma}\}$  is clear, it then turns out that
\begin{equation}
 C_{N,\gamma}=\min\left\{C^{\rm pol}_{N,\gamma}, \,C^{\rm tor}_{N,\gamma}\right\}.
\label{best_PT}
\end{equation}
Therefore, the computation of $C_{N,\gamma}$ can be decomposed into $C^{\rm pol}_{N,\gamma}$ and $C^{\rm tor}_{N,\gamma}$. 
\subsection{Evaluation of $C^{\rm pol}_{N,\gamma}$}
 Throughout this subsection,  ${\bm u}\in C_c^\infty(\dot{\mathbb{R}}^N)^N\setminus\{\bm{0}\}$ is  assumed to be poloidal. 
Notice from  Proposition \ref{prop:PT} that $\bm{u}$ can be written as
\begin{align}
 {\bm u}&={\bm u}_P={\bm D}g={\bm \sigma}\triangle_\sigma g-{r}\partial_{r}'\nabla_{\!\sigma}g
\\&=\bm{\sigma}\triangle_\sigma g-\(\partial+N-1\)\nabla_{\!\sigma}g
 \quad\text{ on }\dot{\mathbb{R}}^N
\label{PP}
\end{align}
for the poloidal potential $g=\triangle_\sigma^{-1}u_R$, where and hereafter we use the abbreviation
\[
\partial =r\partial_r.
\]
In the traditional way from \cite{Tertikas-Z}, let us express the spherical harmonics decomposition of $\bm{u}$ together with $g$ as
\begin{equation}
\left\{\begin{lgathered}
\bm{u}=\sum_{\nu=1}^\infty \bm{u}_\nu,\quad\ \bm{u}_\nu=\bm{D}g_\nu=-\bm{\sigma}\alpha_\nu g_\nu-(\partial+N-1)\nabla_{\!\sigma}g_\nu,
\\
g=\sum_{\nu=1}^\infty g_\nu,\qquad  -\triangle_\sigma g_\nu=\alpha_\nu g_\nu.
\end{lgathered}\right.
\label{PP_nu}
\end{equation}
Here $\alpha_{(\cdot)}$ denotes the quadratic function given by 
\begin{equation}
\alpha_\nu:=\nu(\nu+N-2)
\label{eigen_Lap}
\end{equation}
which is just the $\nu$-th eigenvalue of $-\triangle_\sigma$ whenever $\nu$ is a nonnegative integer. 
Our goal here is to evaluate the quotient
\[
\displaystyle \frac{\int_{\mathbb{R}^N}|\nabla\bm{u}_\nu|^2|\bm{x}|^{2\gamma}dx}{\int_{\mathbb{R}^N}\frac{|\bm{u}_\nu|^2}{|\bm{x}|^2}|\bm{x}|^{2\gamma}dx}
\]
for each $\nu\in \mathbb{N}$ with $\bm{u}_\nu\not\equiv \bm{0},$ which enables us to find a lower estimate of the same quotient for the whole field $\bm{u}$ since
\begin{equation}
\frac{\int_{\mathbb{R}^N}|\nabla\bm{u}|^2|\bm{x}|^{2\gamma}dx}{\int_{\mathbb{R}^N}\frac{|\bm{u}|^2}{|\bm{x}|^2}|\bm{x}|^{2\gamma}dx}=\frac{\sum_{\nu\in \mathbb{N}}\int_{\mathbb{R}^N}|\nabla\bm{u}_\nu|^2|\bm{x}|^{2\gamma}dx}{\sum_{\nu\in \mathbb{N}}\int_{\mathbb{R}^N}\frac{|\bm{u}_\nu|^2}{|\bm{x}|^2}|\bm{x}|^{2\gamma}dx}\ge \inf_{\nu\in \mathbb{N}}\frac{\int_{\mathbb{R}^N}|\nabla\bm{u}_\nu|^2|\bm{x}|^{2\gamma}dx}{\int_{\mathbb{R}^N}\frac{|\bm{u}_\nu|^2}{|\bm{x}|^2}|\bm{x}|^{2\gamma}dx}.
\qquad
\label{quotient_HL}
\end{equation}
For that purpose, we fix a positive integer $\nu$ for the time being, and we start with taking the transformation of $g_\nu$ into $f$ by the formula
\begin{equation}
 f({\bm x}):=|{\bm x}|^{\gamma+\frac{N-2}{2}} g_\nu({\bm x})
\qquad\forall {\bm x}\in\dot{\mathbb{R}}^N
\qquad\label{BV}
\end{equation}
which stems from Brezis-V{\'a}zquez \cite{Brezis-Vazquez}.
Then $\bm{u}_\nu$ can be expressed in terms of $f$ by the following calculation:
\begin{align}
 r^{\gamma+\frac{N-2}{2}}\bm{u}_\nu&=r^{\gamma+\frac{N-2}{2}}{\bm D}\left({r}^{-\gamma-\frac{N-2}{2}} f\right)
\label{uP_D}
\\&=r^{\gamma+\frac{N-2}{2}}\({\bm \sigma}\triangle_\sigma \big(r^{-\gamma-\frac{N-2}{2}} f\big)-\(\partial+N-1\)\nabla_{\!\sigma}\big(r^{-\gamma-\frac{N-2}{2}} f\big)\)
 \\&=-{\bm \sigma}\alpha_\nu f-\(\partial-\gamma+\tfrac{N}{2}\)\nabla_{\!\sigma}f.
 \label{uP}
\end{align}
Taking the operation of $\partial$ and $\triangle_\sigma$ on this equation yields 
\begin{align}
r^{\gamma+\frac{N-2}{2}}\partial\bm{u}_\nu
&=\(\partial-\gamma-\tfrac{N-2}{2}\)\big(r^{\gamma+\frac{N-2}{2}}\bm{u}_\nu\big)
\\&=\(\partial-\gamma-\tfrac{N-2}{2}\)\(-\bm{\sigma}\alpha_\nu f-\(\partial-\gamma+\tfrac{N}{2}\)\nabla_{\!\sigma}f\)
\\&=-\bm{\sigma}\alpha_\nu \(\partial f-\(\gamma+\tfrac{N-2}{2}\) f\)
\\&\quad -\nabla_{\!\sigma}\(\(\partial-\gamma+\tfrac{N}{2}\)\(\partial f-\(\gamma+\tfrac{N-2}{2}\) f\)\)
\\&=-\bm{\sigma}\alpha_\nu\(\partial f-\(\gamma+\tfrac{N-2}{2}\) f\)
\\&\quad -\nabla_{\!\sigma}\Big(\partial^2f-(2\gamma-1)\partial f+\(\gamma-\tfrac{N}{2}\)\(\gamma+\tfrac{N-2}{2}\)f\Big)
\qquad
\label{du}
\end{align}
and
\begin{align} 
r^{\gamma+\frac{N-2}{2}}\triangle_\sigma \bm{u}_\nu&=-\alpha_\nu\triangle_\sigma\(\bm{\sigma}f\)-\(\partial-\gamma+\tfrac{N}{2}\)\triangle_\sigma\nabla_{\!\sigma}f \\&=-\alpha_\nu\(-{\bm \sigma}(\alpha_\nu+N-1)f +2\:\!\nabla_{\!\sigma}f\)
\\&\quad -\(\partial-\gamma+\tfrac{N}{2}\)\Big(\(-\alpha_\nu +N-3\)\nabla_{\!\sigma} f+2\:\!\alpha_\nu {\bm \sigma} f\Big).
\\&=\bm{\sigma}\alpha_\nu\Big((\alpha_\nu+2\gamma-1)f-2\partial f\Big)
\\&\quad +\nabla_{\!\sigma}\Big(-2\alpha_\nu f+\(\alpha_\nu-N+3\)\(\partial-\gamma+\tfrac{N}{2}\)f\Big)
\label{Ds_u} 
\end{align}
respectively, where the second last equality follows from Lemma~\ref{lemma:comm}. By applying the integration by parts formula
\[
\begin{lgathered}
\int_{\mathbb{S}^{N-1}}\nabla_{\!\sigma}\varphi\cdot\nabla_{\!\sigma} \psi\,\mathrm{d}\sigma=-\int_{\mathbb{S}^{N-1}}\varphi\triangle_\sigma \psi\,\mathrm{d}\sigma,
\end{lgathered}
\]
taking the $L^2$ integral of \eqref{du} with respect to the measure $\frac{1}{r}dr\mathrm{d}\sigma$ yields 
\begin{align} 
&\frac{1}{\alpha_\nu}\int_{\mathbb{R}^N}|\partial_r\bm{u}_\nu|^2|\bm{x}|^{2\gamma}dx
=\frac{1}{\alpha_\nu}\iint_{\mathbb{R}_+\times\mathbb{S}^{N-1}}\left|r^{\gamma+\frac{N-2}{2}}\partial\bm{u}_\nu\right|^2\frac{dr}{r}\:\!\mathrm{d}\sigma 
\\&=\frac{1}{\alpha_\nu}\iint_{\mathbb{R}_+\times\mathbb{S}^{N-1}}\left|\eqref{du}\right|^2\frac{dr}{r}\mathrm{d}\sigma
\\&=\mathop{\iint}_{\mathbb{R}_+\times\mathbb{S}^{N-1}}\( 
\begin{lgathered}
\alpha_\nu\(\partial f-\(\gamma+\tfrac{N-2}{2}\) f\)^2 
\\ +\Big(\partial^2 f+(2\gamma-1)\partial f+\(\gamma-\tfrac{N}{2}\)\(\gamma+\tfrac{N-2}{2}\)f\Big)^2 
\end{lgathered}
\)\frac{dr}{r}\mathrm{d}\sigma\!\!
\\&=\mathop{\iint}_{\mathbb{R}_+\times\mathbb{S}^{N-1}}\(
\begin{lgathered} 
\alpha_\nu\((\partial f)^2
+\(\gamma+\tfrac{N-2}{2}\)^2f^2\)
+\big(\partial^2f\big)^2
\\+(2\gamma-1)^2(\partial f)^2
+\(\gamma-\tfrac{N}{2}\)^2\(\gamma+\tfrac{N-2}{2}\)^2f^2
\\+\(\gamma-\tfrac{N}{2}\)\(\gamma+\tfrac{N-2}{2}\)\(
-2(\partial f)^2\) 
\end{lgathered} 
\)\frac{dr}{r}\mathrm{d}\sigma 
\\&=\mathop{\iint}_{\mathbb{R}_+\times\mathbb{S}^{N-1}}\(
\begin{lgathered} 
\big(\partial^2f\big)^2
+\(\gamma+\tfrac{N-2}{2}\)^2\(\alpha_\nu+\(\gamma-\tfrac{N}{2}\)^2\)f^2
\\ 
+\Big(\alpha_\nu+(2\gamma-1)^2-2\(\gamma+\tfrac{N-2}{2}\)\(\gamma-\tfrac{N}{2}\)\Big)(\partial f)^2
\end{lgathered} 
\)\frac{dr}{r}\mathrm{d}\sigma,\qquad  
\label{L2s_du}
\end{align} 
where the last equality follows by using the integration by parts formula:
\begin{equation}
\int_0^\infty (\partial^j f)(\partial^kf)\frac{dr}{r}=\left\{
\begin{array}{cl}
0&(j+k\ \text{is odd})
\\
(-1)^{\frac{j-k}{2}}\(\partial^{\frac{j+k}{2}}f\)^2&\(j+k\ \text{is even}\)
\end{array}
	\right.,
\label{IbP}
\end{equation}
which holds true since $f$ is assumed to have compact support on $\mathbb{R}^N\setminus\{\bm{0}\}.$
Similarly, taking the $L^2(\mathbb{R}^N)$-scalar product of  \eqref{uP} and \eqref{Ds_u} with respect to the measure $\frac{1}{r}dr\mathrm{d}\sigma$ yields
\begin{align}
&\frac{1}{\alpha_\nu}\int_{\mathbb{R}^N}|\nabla_{\!\sigma}\bm{u}_\nu|^2|\bm{x}|^{2\gamma-2}dx=\frac{1}{\alpha_\nu}\iint_{\mathbb{R}_+\times\mathbb{S}^{N-1}}|\nabla_{\!\sigma}\bm{u}_\nu|^2r^{2\gamma+N-3}dr\mathrm{d}\sigma
\\&=-\frac{1}{\alpha_\nu}\iint_{\mathbb{R}_+\times\mathbb{S}^{N-1}}r^{2\gamma+N-2}\(\bm{u}_\nu\cdot \triangle_\sigma\bm{u}_\nu\)\frac{dr}{r}\mathrm{d}\sigma
\\&=-\frac{1}{\alpha_\nu}\iint_{\mathbb{R}_+\times\mathbb{S}^{N-1}}\eqref{uP}\cdot\eqref{Ds_u}\frac{dr}{r}\mathrm{d}\sigma
\\&=\alpha_\nu\iint_{\mathbb{R}_+\times\mathbb{S}^{N-1}}\Big((\alpha_\nu+2\gamma-1)f^2-2f\partial f\Big)\frac{dr}{r}\mathrm{d}\sigma
\\&\quad +\iint_{\mathbb{R}_+\times\mathbb{S}^{N-1}}\(\partial f-\(\gamma-\tfrac{N}{2}\)f\)\Big((\alpha_\nu-N+3)\(\partial-\gamma+\tfrac{N}{2}\)f-2\alpha _\nu f\Big)\frac{dr}{r}\mathrm{d}\sigma
\\&=\mathop{\iint}_{\mathbb{R}_+\times\mathbb{S}^{N-1}}
\(\begin{lgathered}
(\alpha_\nu -N+3)(\partial f)^2
\\+ \Big(\alpha_\nu (\alpha_\nu +4\gamma-N-1)+(\alpha_\nu -N+3)\(\gamma-\tfrac{N}{2}\)^2\Big)f^2
\end{lgathered}\)\frac{dr}{r}\mathrm{d}\sigma.
\qquad
\label{uDsu}
\end{align}
We are now ready to express in terms of $f$ the integrals in \eqref{H-L}, that is, we have
\begin{align}
\frac{1}{\alpha_\nu}
&\int_{\mathbb{R}^N}|\bm{u}_\nu|^2|{\bm x}|^{2\gamma-2}dx
=\frac{1}{\alpha_\nu}\iint_{\mathbb{R}_+\times\mathbb{S}^{N-1}}|\bm{u}_\nu|^2r^{2\gamma+N-3}dr\mathrm{d}\sigma
\\&=\frac{1}{\alpha_\nu}
\iint_{\mathbb{R}_+\times\mathbb{S}^{N-1}}\left|\eqref{uP}\right|^2\frac{dr}{r}\:\!\mathrm{d}\sigma
\\&=\frac{1}{\alpha_\nu}\iint_{\mathbb{R}_+\times\mathbb{S}^{N-1}}\(\alpha_\nu^2 f^2+\left|\nabla_{\!\sigma}\(\partial f-\(\gamma-\tfrac{N}{2}\)f\)\right|^2\)\frac{dr}{r}\mathrm{d}\sigma
\\&=\iint_{\mathbb{R}_+\times\mathbb{S}^{N-1}}\(\alpha_\nu f^2+\(\partial f-\(\gamma-\tfrac{N}{2}\)f\)^2\)\frac{dr}{r}\mathrm{d}\sigma
\\&=\iint_{\mathbb{R}_+\times\mathbb{S}^{N-1}}\Big(
(\partial f)^2+\(\alpha_\nu+\(\gamma-\tfrac{N}{2}\)^2\)f^2
\Big)\frac{dr}{r}\mathrm{d}\sigma
\qquad\quad \label{L2_u}
\end{align}
and
\begin{align}
&\frac{1}{\alpha_\nu}\int_{\mathbb{R}^N}|\nabla \bm{u}_\nu|^2|{\bm x}|^{2\gamma}dx
 =\frac{1}{\alpha_\nu}\int_{\mathbb{R}^N}\(|\partial_r \bm{u}_\nu|^2+|\bm{x}|^{-2}|\nabla_{\!\sigma}\bm{u}_\nu|^2\)|\bm{x}|^{2\gamma}dx
\\&=\eqref{L2s_du} +\eqref{uDsu}
\\&=\mathop{\iint}_{\mathbb{R}_+\times\mathbb{S}^{N-1}}\(
\begin{lgathered} 
\big(\partial^2f\big)^2
+\(\gamma+\tfrac{N-2}{2}\)^2\(\alpha_\nu+\(\gamma-\tfrac{N}{2}\)^2\)f^2
\\ 
+\Big(\alpha_\nu+(2\gamma-1)^2-2\(\gamma+\tfrac{N-2}{2}\)\(\gamma-\tfrac{N}{2}\)\Big)(\partial f)^2
\\ +(\alpha_\nu -N+3)(\partial f)^2
\\+ \Big(\alpha_\nu (\alpha_\nu +4\gamma-N-1)+(\alpha_\nu -N+3)\(\gamma-\tfrac{N}{2}\)^2\Big)f^2
\end{lgathered}\)\frac{dr}{r}\mathrm{d}\sigma
\\&=\mathop{\iint}_{\mathbb{R}_+\times\mathbb{S}^{N-1}}\(
\begin{lgathered}
\(\partial^2f\)^2+2\(\alpha_\nu+\(\gamma+\tfrac{N-2}{2}\)^2-(N-1)\gamma+1\)(\partial f)^2\\
+\(
\begin{lgathered}
\(\(\gamma+\tfrac{N-2}{2}\)^2+\alpha_\nu-N+3\)\(\alpha_\nu+\(\gamma-\tfrac{N}{2}\)^2\)
\\+4(\gamma-1)\alpha_\nu
\end{lgathered}
\)f^2
\end{lgathered}
\)\frac{dr}{r}\mathrm{d}\sigma.
\qquad \label{L2_du} 
\end{align}
In order to numericalize the derivative operators, 
we now traditionally introduce  the $1$-D Fourier-Emden transform (henceforth, FET for short) $\varphi\mapsto \widehat{\varphi}$ in the radial direction as follows:
\begin{equation}
 \widehat{\varphi}(\tau,{\bm \sigma})=\frac{1}{\sqrt{2\pi}}\int_{\mathbb{R}}e^{-i\tau t}\varphi(e^t {\bm \sigma})dt,  
\quad\text{where}\ i=\sqrt{-1}.
 \label{FET}
\end{equation}
Notice that this transform changes $\partial$ into the algebraic multiplication by $i\tau$:
\begin{equation}
\widehat{\partial\varphi}=(i\tau)^k \widehat{\varphi}.
\label{Fourier_op}
\end{equation}
Also notice from the $L^2(\mathbb{R})$ isometry of FET that
\[
\int_{\mathbb{R}}\left|\widehat{\varphi}(\tau,\bm{\sigma})\right|^2d\tau=\int_{\mathbb{R}}\left|\varphi(e^t\bm{\sigma})\right|^2dt=\int_{\mathbb{R}_+}\left|\varphi(r\bm{\sigma})\right|^2\frac{dr}{r}.
\]
By applying these formulas to $\varphi\in \left\{f,\partial f\right\}$, we get from \eqref{L2_u} that
\begin{align}
\frac{1}{\alpha_\nu}\int_{\mathbb{R}^N}\frac{|\bm{u}_\nu|^2}{|\bm{x}|^2}|\bm{x}|^{2\gamma}dx&=\iint_{\mathbb{R}\times\mathbb{S}^{N-1}}\Big(\big|\widehat{\partial f}\big|^2+\(\alpha_\nu+\(\gamma-\tfrac{N}{2}\)^2\)|\widehat{f}|^2\Big)d\tau\mathrm{d}\sigma
\\&=\iint_{\mathbb{R}\times\mathbb{S}^{N-1}}\(\tau^2+\alpha_\nu+\(\gamma-\tfrac{N}{2}\)^2\)|\widehat{f}|^2d\tau\mathrm{d}\sigma
\\&=\iint_{\mathbb{R}\times\mathbb{S}^{N-1}}Q_0\(\tau^2,\alpha_\nu\)|\widehat{f}|^2d\tau\mathrm{d}\sigma,
\label{L2_Q0}
\end{align}
where $Q_0(\cdot,\cdot)$ denotes the polynomial given by
\begin{equation}
Q_0\(\tau,{a}\):=\tau+{a}+\(\gamma-\tfrac{N}{2}\)^2.
\label{poly:Q0}
\end{equation}
Similarly, by applying FET to $\varphi\in \{f,\partial f,\partial^2f\}$ we get from \eqref{L2_du} that
\begin{equation}
\frac{1}{\alpha_\nu}\int_{\mathbb{R}^N}|\nabla\bm{u}_\nu|^2|\bm{x}|^{2\gamma}dx=\iint_{\mathbb{R}\times\mathbb{S}^{N-1}}Q_1\(\tau^2,\alpha_\nu\)|\widehat{f}|^2d\tau\mathrm{d}\sigma,
\qquad
\label{L2_Q1}
\end{equation}
where 
\begin{align}
Q_1(\tau,{a})&:=\tau^2+2\(\alpha_\nu+\(\gamma+\tfrac{N-2}{2}\)^2-(N-1)\gamma+1\)\tau
\\ &\quad\ +
\begin{lgathered}
\(\(\gamma+\tfrac{N-2}{2}\)^2+\alpha_\nu-N+3\)\(\alpha_\nu+\(\gamma-\tfrac{N}{2}\)^2\)
+4(\gamma-1)\alpha_\nu
\end{lgathered}
\\&=\(\tau+{a}-N+3+\(\gamma +\tfrac{N-2}{2}\)^2\)Q_0(\tau,{a})+4(\gamma-1){a}.
\label{poly:Q1}
\end{align}
We are now in the position to begin to evaluate $C^{\rm pol}_{N,\gamma}:$ taking the ratio of \eqref{L2_Q1} to \eqref{L2_Q0} yields
\begin{align}
\frac{\int_{\mathbb{R}^N}|\nabla\bm{u}_\nu|^2|\bm{x}|^{2\gamma}dx}{\int_{\mathbb{R}^N}\frac{|\bm{u}_\nu|^2}{|\bm{x}|^2}|\bm{x}|^{2\gamma}dx}&=\frac{\iint_{\mathbb{R}\times\mathbb{S}^{N-1}}Q_1\(\tau^2,\alpha_\nu\)|\widehat{f}|^2d\tau\mathrm{d}\sigma}{\iint_{\mathbb{R}\times\mathbb{S}^{N-1}}Q_0\(\tau^2,\alpha_\nu\)|\widehat{f}|^2d\tau\mathrm{d}\sigma}
\\&\ge \inf_{\tau\in \mathbb{R}}\frac{Q_1(\tau^2,\alpha_\nu)}{Q_0(\tau^2,\alpha_\nu)}
=\inf_{\tau\ge 0}\frac{Q_1(\tau,\alpha_\nu)}{Q_0(\tau,\alpha_\nu)}.
\end{align}
Combine this result with \eqref{quotient_HL}, then we get
\begin{align}
\frac{\int_{\mathbb{R}^N}|\nabla\bm{u}|^2|\bm{x}|^{2\gamma}dx}{\int_{\mathbb{R}^N}\frac{|\bm{u}|^2}{|\bm{x}|^2}|\bm{x}|^{2\gamma}dx}&\ge \inf_{\tau\ge 0}\inf_{\nu\in \mathbb{N}}\frac{Q_1(\tau,\alpha_\nu)}{Q_0(\tau,\alpha_\nu)}.
\label{infinf}
\end{align}
 In order to evaluate the right-hand side, we see from \eqref{poly:Q0} and \eqref{poly:Q1} that
 \begin{equation}
  \frac{Q_1(\tau,{a})}{Q_0(\tau,{a})}=\tau+{a}-N+3+\(\tau+\tfrac{N-2}{2}\)^2+\frac{4(\gamma-1){a}}{\tau+{a}+\(\gamma-\frac{N}{2}\)^2},
\label{ratio:Q1/Q0}
 \end{equation}
 and that it is monotone increasing in ${a}\ge\alpha_1$ for each $\tau\ge0$; indeed, a straightforward calculation yields
\[\begin{split}
 \frac{\partial}{\partial{a}}\frac{Q_1(\tau,{a})}{Q_0(\tau,{a})}
&=1+\frac{\partial}{\partial{a}}\frac{4(\gamma-1){a}}{\tau+{a}+\(\gamma-\frac{N}{2}\)^2}
 =1+\frac{4(\gamma-1)\(\tau+\(\gamma-\frac{N}{2}\)^2\)}{\(\tau+{a}+\(\gamma-\frac{N}{2}\)^2\)^2}
 \\&=\frac{\(\begin{array}{l}
	   \tau^2+2\(\(\gamma-\frac{N-2}{2}\)^2+{a}+N-3\)\tau\vspace{0.25em}
\\ +\(\gamma-\frac{N}{2}\)^2\(\(2+\gamma-\frac{N}{2}\)^2+2({a}+N-4)\)+{a}^2\end{array}
 \)}{\(\tau+{a}+\(\gamma-\frac{N}{2}\)^2\)^2}
>0
\end{split}
\]
from ${a}+N\ge\alpha_1+N=2N-1>4$, whence
\[
\inf_{\nu\in \mathbb{N}}\frac{Q_1(\tau,\alpha_\nu)}{Q_0(\tau,\alpha_\nu)}=\frac{Q_1(\tau,\alpha_1)}{Q_0(\tau,\alpha_1)}.
\]
Therefore, returning to \eqref{infinf} 
we get
\begin{equation}
\begin{aligned}
 \frac{\int_{\mathbb{R}^N}|\nabla{\bm u}|^2|\bm{x}|^{2\gamma}dx}{\int_{\mathbb{R}^N}\frac{|{\bm u}|^2}{|\bm{x}|^2}|\bm{x}|^{2\gamma}dx}
&\ge \min_{\tau\ge0}\frac{Q_1(\tau,\alpha_1)}{Q_0(\tau,\alpha_1)}.
\end{aligned}
\end{equation}
In other words, it turns out from \eqref{ratio:Q1/Q0} that the inequality
\begin{equation}
 \int_{\mathbb{R}^N}|\nabla {\bm u}|^2|{\bm x}|^{2\gamma}dx\ge C^{\rm pol}_{N,\gamma}\int_{\mathbb{R}^N}\frac{|{\bm u}|^2}{|\bm{x}|^2}|{\bm x}|^{2\gamma}dx
\label{HL_P}
\end{equation}
holds for all poloidal fields ${\bm u}\in C_c^\infty(\dot{\mathbb{R}}^N)^N$ with the constant number
\begin{align}
 C^{\rm pol}_{N,\gamma}&=\min_{\tau\ge0}\frac{Q_1(\tau,\alpha_1)}{Q_0(\tau,\alpha_1)} 
 \\&
 =\left(\gamma+\frac{N-2}{2}\right)^2+2+\min_{\tau\ge0}\left(\tau+\frac{4(N-1)(\gamma-1)}{\tau+\big(\gamma-\frac{N}{2}\big)^2+N-1}\right).\qquad 
\label{C_Pg}
\end{align}

\subsection{Optimality of $C^{\rm pol}_{N,\gamma}$ and a correction term}
\label{subsec:optimal_pol}
Here we consider only the case when 
the minimum in \eqref{C_Pg} is attained by $\tau=0$:
\begin{equation}
\min_{\tau\ge0}\left(\tau+\frac{4(N-1)(\gamma-1)}{\tau+\big(\gamma-\frac{N}{2}\big)^2+N-1}\right)=\frac{4(N-1)(\gamma-1)}{\big(\gamma-\frac{N}{2}\big)^2+N-1},
\label{min_Q1/Q0}
\end{equation}
which is equivalent to  \[C^{\rm pol}_{N,\gamma}=\frac{Q_1(0,\alpha_1)}{Q_0(0,\alpha_1)} =\left(\gamma+\tfrac{N-2}{2}\right)^2 \mfrac{\big(\gamma-\frac{N}{2}\big)^2+N+1}{\big(\gamma-\frac{N}{2}\big)^2+N-1}.\] 
Under this assumption, let us show the sharpness of the constant $C^{\rm pol}_{N,\gamma}$ in the inequality \eqref{HL_P}.  To do so, 
define
$\{{h}_n\}_{n\in\mathbb{N}}\subset C_c^\infty(\dot{\mathbb{R}}^N)$ as a sequence of scalar fields by
\begin{equation}
{h}_n({\bm x})=\frac{x_1}{|\bm{x}|}\zeta\left(\tfrac{1}{n}\log|\bm{x}|\right)
\qquad  \forall n\in\mathbb{N},\ \ \forall {\bm x}\in\dot{\mathbb{R}}^N,
\label{seq_hn}
\end{equation}
where $\zeta:\mathbb{R}\to\mathbb{R}$ is a smooth function $\not\equiv0$ with compact support on $\mathbb{R}$\:\!; notice that the eigenequation
\[
 -\triangle_\sigma {h}_n=\alpha_1 {h}_n\quad\text{ on }\dot{\mathbb{R}}^N\quad(\forall n\in\mathbb{N})
\]
holds from $-\triangle_\sigma x_1=\alpha_1 x_1$. In this setting, apply \eqref{BV}, \eqref{L2_Q0} and \eqref{L2_Q1} to the case
\[
\left\{\nu=1,\quad\bm{u}(\bm{x})=\bm{u}_1(\bm{x})=\bm{D}\(|\bm{x}|^{-\gamma-\frac{N-2}{2}}{h}_n(\bm{x})\),\quad f(\bm{x})={h}_n(\bm{x})\right\},
\]
then we have
\[
\begin{split}
 \frac{\int_{\mathbb{R}^N}|\nabla {\bm u}|^2|\bm{x}|^{2\gamma}dx}{\int_{\mathbb{R}^N}|{\bm u}|^2|\bm{x}|^{2\gamma-2}dx}
&=\frac{\iint_{\mathbb{R}\times\mathbb{S}^{N-1}} Q_1(\tau^2,\alpha_1)|\widehat{h_n}|^2d\tau\mathrm{d}\sigma}{\iint_{\mathbb{R}\times\mathbb{S}^{N-1}} Q_0(\tau^2,\alpha_1)|\widehat{{h}_n}|^2d\tau\mathrm{d}\sigma}
 =\frac{\int_{\mathbb{R}}Q_1\(\tau^2,\alpha_1\)|\widehat{\zeta}\(n\tau\)|^2d\tau}{\int_{\mathbb{R}}Q_0\(\tau^2,\alpha_1\)|\widehat{\zeta}\(n\tau\)|^2d\tau}
 \\&=\frac{\int_{\mathbb{R}}Q_1\((\tau/n)^2,\alpha_1\)|\widehat{\zeta}(\tau)|^2d\tau}{\int_{\mathbb{R}}Q_0\((\tau/n)^2,\alpha_1\)|\widehat{\zeta}(\tau)|^2d\tau}
 =\frac{Q_1(0,\alpha_1)\int_{\mathbb{R}}|\zeta({t})|^2d{t}+O(n^{-2})}{Q_0(0,\alpha_1)\int_{\mathbb{R}}|\zeta({t})|^2d{t}+O(n^{-2})}
\\& \longrightarrow\ \frac{Q_1(0,\alpha_1)}{Q_0(0,\alpha_1)}\quad (n\to \infty ).
\end{split}
\]
This proves the desired sharpness of $C^{\rm pol}_{N,\gamma}$. 

Next, we derive a correction term to make the inequality \eqref{HL_P} stronger. 
To this end, returning to \eqref{ratio:Q1/Q0}, we see that
\begin{align}
 & \frac{1}{\tau}\(\frac{Q_1(\tau,\alpha_1)}{Q_0(\tau,\alpha_1)}-\frac{Q_1(0,\alpha_1)}{Q_0(0,\alpha_1)}\)
 \\&=1+\frac{4(\gamma-1)(N-1)}{\tau}\(\frac{1}{\tau+\big(\gamma-\frac{N}{2}\big)^2+N-1}-\frac{1}{\big(\gamma-\frac{N}{2}\big)^2+N-1}\)
 \\&=1-\frac{4(\gamma-1)(N-1)}{\(\tau+\big(\gamma-\frac{N}{2}\big)^2+N-1\)\(\big(\gamma-\frac{N}{2}\big)^2+N-1\)}=:{G}(\tau)
\label{c0}
\end{align}
is a monotone function in $\tau>0$, whence
\[
 \inf_{\tau>0}{G} (\tau)=\min\left\{{G}(\infty),{G}(0)\right\}=\min\left\{1,{G}(0)\right\}.
\]
By abbreviating as
\begin{equation}
c_{N,\gamma}=\min\left\{1,{G}(0)\right\},
\label{c1}
\end{equation}
we then see from \eqref{L2_Q0} and \eqref{L2_Q1} that
 \begin{align}
  \int_{\mathbb{R}^N}&|\nabla {\bm u}_\nu|^2|{\bm x}|^{2\gamma}dx-C^{\rm pol}_{N,\gamma}\int_{\mathbb{R}^N}|{\bm u}_\nu|^2|{\bm x}|^{2\gamma-2}dx
\\&=\alpha_\nu\iint_{\mathbb{R}\times\mathbb{S}^{N-1}} \(Q_1(\tau^2,\alpha_\nu)-\frac{Q_1(0,\alpha_1)}{Q_0(0,\alpha_1)}Q_0(\tau^2,\alpha_\nu)\)|\widehat{f}|^2d\tau\:\!\mathrm{d}\sigma
\\&\ge\alpha_\nu c_{N,\gamma}\iint_{\mathbb{R}\times\mathbb{S}^{N-1}} \tau^2 Q_0(\tau^2,\alpha_\nu)|\widehat{f}|^2d\tau\:\!\mathrm{d}\sigma
\\&=\alpha_\nu c_{N,\gamma}\iint_{\mathbb{R}\times\mathbb{S}^{N-1}}\(\big|\tau^2\widehat{f}\big|^2+\(\alpha_\nu+\(\gamma-\tfrac{N}{2}\)^2\)\big|\tau \widehat{f}\big|^2\)d\tau\mathrm{d}\sigma
\\&=\alpha_\nu c_{N,\gamma}\iint_{\mathbb{R}_+\times\mathbb{S}^{N-1}}\(\(\partial^2f\)^2+\(\alpha_\nu+\(\gamma-\tfrac{N}{2}\)^2\)\(\partial f\)^2\)\frac{dr}{r}\mathrm{d}\sigma
\\&=c_{N,\gamma}\iint_{\mathbb{R}_+\times\mathbb{S}^{N-1}}\left|\partial \eqref{uP}\right|^2\frac{dr}{r}\mathrm{d}\sigma
\\&=c_{N,\gamma}\iint_{\mathbb{R}_+\times\mathbb{S}^{N-1}}\left|\partial \(r^{\gamma+\frac{N-2}{2}}{\bm u}_\nu\)\right|^2\frac{dr}{r}\mathrm{d}\sigma.
\\&=c_{N,\gamma}\int_{\mathbb{R}^N}\left|\bm{x}\cdot\nabla \(r^{\gamma+\frac{N-2}{2}}{\bm u}_\nu\)\right|^2|\bm{x}|^{-N}dx.
\end{align}
Therefore, by taking the summation over $\nu\in \mathbb{N},$ we eventually get the inequality
\begin{align}
  \int_{\mathbb{R}^N}|\nabla {\bm u}|^2|{\bm x}|^{2\gamma}dx\ge C^{\rm pol}_{N,\gamma}\int_{\mathbb{R}^N}|{\bm u}|^2|{\bm x}|^{2\gamma-2}dx
 + c_{N,\gamma}\mathcal{R}_{N,\gamma}[{\bm u}] 
\label{HL_rem_P}
\end{align}
for all poloidal fields ${\bm u}\in C_c^\infty(\dot{\mathbb{R}}^N)^N$, where $\mathcal{R}_{N,\gamma}[\,\cdot\,]$ denotes the nonnegative functional given by
\begin{equation}
 \mathcal{R}_{N,\gamma}[{\bm u}]:= \int_{\mathbb{R}^N}\left|{\bm x}\cdot\nabla\(|{\bm x}|^{\gamma+\frac{N-2}{2}}{\bm u}\)\right|^2|{\bm x}|^{-N}dx.\label{rem}
\end{equation}
In particular,  as long as $c_{N,\gamma}>0$, 
the inequality \eqref{HL_rem_P} is stronger than \eqref{HL_P}.

As a side note, the optimality of $C^{\rm pol}_{N,\gamma}$ in \eqref{C_Pg} holds even when \eqref{min_Q1/Q0} is not satisfied, though the proof becomes complicated (see \cite[\S4.3]{HL_pre}) and we do not know whether the difference between both sides of \eqref{HL_P} can be bounded  below by a norm of ${\bm u}$. For our present purpose, however, this case is fortunately avoidable . 



\subsection{Evaluation and optimality 
of $C^{\rm tor}_{N,\gamma}$}
\label{subsec:HL_tor}
Let $\bm{u}$ be a toroidal field in $C_c^\infty(\dot{\mathbb{R}}^N)^N$ and let $\bm{v}$ be its Brezis-V\'{a}zquez type transform given by
\[
{\bm v}({\bm x})=|{\bm x}|^{\gamma+\frac{N-2}{2}}{\bm u}({\bm x})
\]
which is also a toroidal field in $C_c^\infty (\dot{\mathbb{R}}^N)^N.$ Then we have
\begin{align}
\int_{\mathbb{R}^N}|\partial\bm{u}&|^2|\bm{x}|^{2\gamma-2}dx
=\iint_{\mathbb{R}_+\times\mathbb{S}^{N-1}}\left|\partial\(r^{-\gamma-\frac{N-2}{2}}\bm{v}\)\right|^2r^{2\gamma+N-3}dr\mathrm{d}\sigma
\\&=\iint_{\mathbb{R}_+\times\mathbb{S}^{N-1}}\left|-\(\gamma+\tfrac{N-2}{2}\)\bm{v}+\partial\bm{v}\right|^2\frac{dr}{r}\mathrm{d}\sigma
\\&=\iint_{\mathbb{R}_+\times\mathbb{S}^{N-1}}\(\(\gamma+\tfrac{N-2}{2}\)^2|\bm{v}|^2+\left|\partial\bm{v}\right|^2\)\frac{dr}{r}\mathrm{d}\sigma
\\&=\(\gamma+\tfrac{N-2}{2}\)^2\int_{\mathbb{R}^N}|\bm{u}|^2|\bm{x}|^{2\gamma-2}dx+\mathcal{R}_{N,\gamma}[\bm{u}],
\label{L2du_tor}
\end{align}
where the second last equality follows by integration by parts together with the compactness of the support of $\bm{v}$ and 
where $\mathcal{R}_{N,\gamma}[\,\cdot\,]$ is the same functional as in \eqref{rem}. 
On the other hand, recall from Corollary \ref{corollary} that 
every toroidal field has zero-spherical mean; then, by considering the spherical harmonics expansion of 
${\bm u}$, 
we easily get the $L^2(\mathbb{S}^{N-1})$ estimate  
\begin{align}
 &\int_{\mathbb{S}^{N-1}}|\nabla_{\!\sigma}{\bm u}|^2\mathrm{d}\sigma\ge\alpha_1\int_{\mathbb{S}^{N-1}}|{\bm u}|^2\mathrm{d}\sigma\quad\ (\text{for each radius}),
 \\\text{whence}\quad&
 \int_{\mathbb{R}^N}|\nabla_{\!\sigma}{\bm u}|^2|\bm{x}|^{2\gamma-2}dx\ge\alpha_1\int_{\mathbb{R}^N}|{\bm u}|^2|{\bm x}|^{2\gamma-2}dx.
 \label{L2Dsu_tor}
\end{align}
Therefore, by way of the identity $|\nabla\bm{u}|^2=\(|\partial\bm{u}|^2+|\nabla_{\!\sigma}\bm{u}|^2\)|\bm{x}|^{-2},$ taking both sides of \eqref{L2du_tor} plus \eqref{L2Dsu_tor} yields
\begin{equation}
 \int_{\mathbb{R}^N}|\nabla {\bm u}|^2|{\bm x}|^{2\gamma}dx
  \ge C^{\rm tor}_{N,\gamma}\int_{\mathbb{R}^N}|{\bm u}|^2|{\bm x}|^{2\gamma-2}dx+\mathcal{R}_{N,\gamma}[{\bm u}]
\label{H-L_tor_rem}  
\end{equation}
with the constant number
\begin{equation}
 C^{\rm tor}_{N,\gamma}=\left(\gamma+\tfrac{N-2}{2}\right)^2+N-1.
\label{C_Tg}  
\end{equation}
Notice that the equality in \eqref{H-L_tor_rem} holds whenever $-\triangle_\sigma {\bm u}=\alpha_1 {\bm u}$ on $\dot{\mathbb{R}}^N$. 
In particular, we have obtained the H-L inequality
\begin{equation}
 \int_{\mathbb{R}^N}|\nabla {\bm u}|^2|{\bm x}|^{2\gamma}dx
  \ge C^{\rm tor}_{N,\gamma}\int_{\mathbb{R}^N}|{\bm u}|^2|{\bm x}|^{2\gamma-2}dx
\label{HL_T}
\end{equation}
for toroidal fields ${\bm u}\in C_c^\infty(\dot{\mathbb{R}}^N)^N$. 
To show that the constant $C^{\rm tor}_{N,\gamma}$ is sharp in this inequality, we put 
\[
 {\bm v}^{(1,2)}({\bm x})=(-x_2,x_1,0,\cdots,0)\qquad\forall {\bm x}\in\dot{\mathbb{R}}^N,
\]
which is the same as in \eqref{ex_T} and satisfies  
the eigenequation
\begin{equation}
 -\triangle_\sigma {\bm v}^{(1,2)}=\alpha_1 {\bm v}^{(1,2)}\quad\text{ on } \dot{\mathbb{R}}^N.
\end{equation}
In this setting, for every $n\in \mathbb{N}$ define $\bm{v}$ and $\bm{u}$ by
\[
\left.\begin{array}{l}
 {\bm v}({\bm x})=\zeta\left(\mfrac{\log|{\bm x}|}{n}\right){\bm v}^{(1,2)}({\bm x}/|{\bm x}|)
 \vspace{0.5em} \\
{\bm u}({\bm x})= |{\bm x}|^{-\gamma-\frac{N-2}{2}}{\bm v}({\bm x})\end{array}
\right\}\quad \forall {\bm x}\in\dot{\mathbb{R}}^N,\  \forall n\in\mathbb{N} ,
\]
where  $\zeta:\mathbb{R}\to\mathbb{R}$ is a smooth function $\not\equiv0$ with compact support on $\mathbb{R}$.
Then, since the equality condition of \eqref{H-L_tor_rem} is satisfied, we get
\[
  \int_{\mathbb{R}^N}|\nabla {\bm u}|^2|{\bm x}|^{2\gamma}dx
  = C^{\rm tor}_{N,\gamma}\int_{\mathbb{R}^N}|{\bm u}|^2|{\bm x}|^{2\gamma-2}dx+\mathcal{R}_{N,\gamma}[{\bm u}]\quad\ (\forall n\in\mathbb{N}).
\] 
Dividing both sides by $\int_{\mathbb{R}^N}|{\bm u}|^2|{\bm x}|^{2\gamma-2}dx=\int_{\mathbb{R}^N}|{\bm v}|^2|\bm{x}|^{-N}dx$
,  we obtain
\[
\begin{split}
& \frac{\int_{\mathbb{R}^N}|\nabla {\bm u}|^2|{\bm x}|^{2\gamma}dx}{\int_{\mathbb{R}^N}|{\bm u}|^2|{\bm x}|^{2\gamma-2}dx}-C^{\rm tor}_{N,\gamma}
\\& =\frac{\mathcal{R}_{N,\gamma}[{\bm u}]}{\int_{\mathbb{R}^N}|{\bm v}|^2|\bm{x}|^{-N}dx} 
=\frac{\int_{\mathbb{R}^N}|\partial_{}{\bm v}|^2|\bm{x}|^{-N}dx}{\int_{\mathbb{R}^N}|{\bm v}|^2|\bm{x}|^{-N}dx}
 =\frac{1}{n^2}\frac{\int_{\mathbb{R}}(\zeta'({s}))^2d{s}}{\int_{\mathbb{R}}(\zeta({s}))^2d{s}}
\ \underset{(n\to\infty)}{\longrightarrow}
 0,
\end{split} 
\]
which proves the desired sharpness of $C^{\rm tor}_{N,\gamma}$.

\subsection{
Non-attainability of $C_{N,\gamma}$} 
\label{subsec:concl}
Substituting \eqref{C_Pg} and \eqref{C_Tg} into \eqref{best_PT}, we see 
that the inequality \eqref{H-L} holds for all solenoidal fields ${\bm u}$ in $C_c^\infty(\dot{\mathbb{R}}^N)^N$ and hence in $\mathcal{D}_\gamma(\dot{\mathbb{R}}^N)$ with the best constant number
\[
\begin{split}
  C_{N,\gamma}&=\min\left\{C^{\rm pol}_{N,\gamma},C^{\rm tor}_{N,\gamma}\right\}
 \\&=\left(\gamma+\mfrac{N-2}{2}\right)^2+\min \left\{2+\min_{\tau\ge0}\left(\tau+\mfrac{4(N-1)(\gamma-1)}{\tau+\big(\gamma-\frac{N}{2}\big)^2+N-1}\right),\ N-1\right\}
\\&= 
\left\{
\begin{array}{ll}
 \(\gamma+\frac{N-2}{2}\)^2+N-1 &\text{for }\ \gamma\in I_N
\\[0.5em]
 \(\gamma+\frac{N-2}{2}\)^2 \frac{\(\gamma-\frac{N}{2}\)^2+N+1}{\(\gamma-\frac{N}{2}\)^2+N-1}\ &\text{for }\ \gamma\not\in I_N
\end{array}
\right..
\end{split}
\]
Here the last equality is merely a repeat of \eqref{CM_const}; 
more precisely, for our present purpose, we shall exploit the following two facts:
\begin{lemma}[\cite{HL-const_AdM}]
\label{lemma:const1}
It holds that
 \[\left\{\begin{array}{lll}
 C^{\rm pol}_{N,\gamma}>C^{\rm tor}_{N,\gamma}
 &\text{for }
\gamma\in I_N
\vspace{0.5em}
\\ 
 C^{\rm tor}_{N,\gamma}\ge C^{\rm pol}_{N,\gamma}
=\(\gamma+\frac{N-2}{2}\)^2 \frac{\(\gamma-\frac{N}{2}\)^2+N+1}{\(\gamma-\frac{N}{2}\)^2+N-1}&
\text{for }\gamma\not\in I_N
\end{array}
\right..
\]
\end{lemma}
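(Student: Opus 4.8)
The plan is to route everything through the scalar minimization already isolated in \eqref{C_Pg}. Abbreviating $b=\bigl(\gamma-\tfrac N2\bigr)^2+N-1$ (which is $\ge N-1>0$) and $c=4(N-1)(\gamma-1)$, and setting $m:=\min_{\tau\ge0}\bigl(\tau+\tfrac{c}{\tau+b}\bigr)$, I would first combine \eqref{C_Pg} with \eqref{C_Tg} into the single identity
\[
C^{\rm pol}_{N,\gamma}-C^{\rm tor}_{N,\gamma}=\left(\gamma+\tfrac{N-2}{2}\right)^2+2+m-\left(\gamma+\tfrac{N-2}{2}\right)^2-(N-1)=m-(N-3),
\]
so that the entire comparison is controlled by the sign of $m-(N-3)$.

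Next I would evaluate $m$ by elementary calculus on $\phi(\tau)=\tau+c/(\tau+b)$. Since $\phi'(\tau)=1-c/(\tau+b)^2$, the minimizer is the interior critical point $\tau=\sqrt c-b$ when $c>b^2$, giving $m=2\sqrt c-b$, and is $\tau=0$ otherwise, giving $m=c/b$ (this single case covers both $0<c\le b^2$ and $c\le0$). The elementary bound $b\ge N-1>N-3$ then yields the clean equivalence $m>N-3\Longleftrightarrow c>(N-3)b$: in the regime $c>b^2$ both sides hold automatically, since $c>b^2\ge(N-1)b>(N-3)b$ and $m=2\sqrt c-b>b\ge N-1$; in the regime $c\le b^2$ the equivalence is immediate from $m=c/b$ and $b>0$.

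Then I would identify the threshold locus with $I_N$. Writing $s=\gamma-\tfrac N2$, the equation $c=(N-3)b$ becomes the quadratic $(N-3)s^2-4(N-1)s-(N-1)^2=0$, whose roots are $s=\tfrac{(N-1)(2\pm\sqrt{N+1})}{N-3}$; rationalizing via $\tfrac{1}{\sqrt{N+1}\mp2}=\tfrac{\sqrt{N+1}\pm2}{N-3}$ shows these are exactly $\gamma_N^\pm-\tfrac N2$. For $N\ge4$ the parabola opens downward, so $c-(N-3)b>0$ precisely on $(\gamma_N^-,\gamma_N^+)=I_N$; for $N=3$ the relation degenerates to the linear condition $\gamma>1=\gamma_3^-$, matching $I_3=(1,\infty)$. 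Combined with the previous step this gives $C^{\rm pol}_{N,\gamma}>C^{\rm tor}_{N,\gamma}\Longleftrightarrow\gamma\in I_N$, which is the first line of the lemma.

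Finally, for $\gamma\notin I_N$ I have $c\le(N-3)b<b^2$ (again using $b>N-3$), so the minimum sits at $\tau=0$ and $m=c/b$; feeding this into \eqref{C_Pg} and invoking the algebraic identity $\left(\gamma+\tfrac{N-2}{2}\right)^2=b+\tfrac c2$ (equivalently the manipulation behind \eqref{min_Q1/Q0}) collapses $C^{\rm pol}_{N,\gamma}$ to the product form $\left(\gamma+\tfrac{N-2}{2}\right)^2\frac{\bigl(\gamma-\frac N2\bigr)^2+N+1}{\bigl(\gamma-\frac N2\bigr)^2+N-1}$, while $m\le N-3$ gives $C^{\rm tor}_{N,\gamma}\ge C^{\rm pol}_{N,\gamma}$. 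I expect the only real obstacle to be the bookkeeping in the third step---solving the quadratic and rationalizing its roots to match the prescribed endpoints $\gamma_N^\pm$, together with the degenerate $N=3$ case---since everything else reduces to a short monotonicity argument and the single identity for $C^{\rm pol}_{N,\gamma}-C^{\rm tor}_{N,\gamma}$; this recovers the numerical verification attributed to \cite{HL-const_AdM} in closed form.
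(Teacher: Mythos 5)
Your proposal is correct, and it supplies something the paper itself does not contain: the paper states Lemma~\ref{lemma:const1} as a quotation from \cite{HL-const_AdM} (describing the underlying verification only as an ``elementary numerical calculation'') and never proves it internally. Your argument is a complete closed-form derivation. I checked the key steps: the reduction $C^{\rm pol}_{N,\gamma}-C^{\rm tor}_{N,\gamma}=m-(N-3)$ follows directly from \eqref{C_Pg} and \eqref{C_Tg}; the evaluation $m=2\sqrt c-b$ for $c>b^2$ and $m=c/b$ for $c\le b^2$ is right (and the two agree at $c=b^2$, so the case split is clean); the equivalence $m>N-3\Leftrightarrow c>(N-3)b$ is sound, since in the regime $c>b^2$ both sides hold automatically via $b\ge N-1$; and with $s=\gamma-\frac N2$ the threshold $c=(N-3)b$ does reduce to $(N-3)s^2-4(N-1)s-(N-1)^2=0$, whose roots $\frac{(N-1)(2\pm\sqrt{N+1})}{N-3}$ rationalize exactly to $\gamma_N^\pm-\frac N2$, with the $N=3$ degeneration to $\gamma>1=\gamma_3^-$ matching $I_3=(1,\infty)$. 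Finally, the identity $\bigl(\gamma+\tfrac{N-2}{2}\bigr)^2=b+\tfrac c2$ holds (expand both sides), and it collapses $b+\tfrac c2+2+\tfrac cb$ into $\bigl(b+\tfrac c2\bigr)\tfrac{b+2}{b}$, which is precisely the product form claimed for $\gamma\notin I_N$; this is consistent with the paper's own remark that \eqref{min_Q1/Q0} is equivalent to that product expression. What your route buys is that the lemma (and with it the simplification of \eqref{CM_orig} to \eqref{CM_const}) becomes self-contained in the present paper rather than delegated to the external reference; what it costs is only the quadratic bookkeeping you already flagged, which you carried out correctly.
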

%

\begin{lemma}[\cite{HL-const_AdM}]
\label{lemma:const2}
 Let $B$ be the quartic function given by
\[
 B(\lambda)=\lambda^4+(N-1)\(2\lambda^2-4\lambda-N+3\)\quad\ \forall\lambda\in\mathbb{R}.
\]
If $\gamma$ satisfies $B\(\gamma-\frac{N}{2}\)\le0$, then the inequality $C^{\rm tor}_{N,\gamma}+2\le C^{\rm pol}_{N,\gamma}$ holds true.
\end{lemma}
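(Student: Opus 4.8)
The plan is to reduce the claimed inequality to an elementary one-variable estimate for the minimization appearing in \eqref{C_Pg}. First I would set $\lambda=\gamma-\frac{N}{2}$ and introduce the abbreviations
\[
c:=4(N-1)(\gamma-1),\qquad \beta:=\lambda^2+N-1,
\]
noting $\beta\ge N-1>0$ since $N\ge3$. Recalling \eqref{C_Pg} and \eqref{C_Tg}, the target inequality $C^{\rm tor}_{N,\gamma}+2\le C^{\rm pol}_{N,\gamma}$ is, after cancelling the common summand $\left(\gamma+\frac{N-2}{2}\right)^2+2$, equivalent to the single assertion
\[
\min_{\tau\ge0}\left(\tau+\frac{c}{\tau+\beta}\right)\ge N-1.
\]
So the first task is merely to record this reformulation, using $\gamma-1=\lambda+\frac{N-2}{2}$.

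The crux is an exact identity linking the hypothesis to the data of this minimization. A direct expansion should give
\[
B(\lambda)=\beta^2-c=\left(\lambda^2+N-1\right)^2-4(N-1)(\gamma-1),
\]
so that the assumption $B\left(\gamma-\frac{N}{2}\right)\le0$ is \emph{precisely} the condition $c\ge\beta^2$; in particular $c\ge\beta^2>0$, forcing $\gamma>1$ and hence a positive numerator. This is the only genuine computation in the argument, and I expect verifying this identity together with the attendant sign bookkeeping to be the main, though mild, obstacle.

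Finally I would exploit this to finish. Since $c>0$, the map $h(\tau)=\tau+\frac{c}{\tau+\beta}$ is strictly convex on $[0,\infty)$ with $h'(\tau)=1-\frac{c}{(\tau+\beta)^2}$ and $h'(0)=1-\frac{c}{\beta^2}\le0$, so its minimum over $\tau\ge0$ is attained at the interior critical point $\tau^\ast=\sqrt{c}-\beta\ge0$, where $h(\tau^\ast)=2\sqrt{c}-\beta$. Because $c\ge\beta^2$ gives $\sqrt{c}\ge\beta$, I then conclude
\[
\min_{\tau\ge0}h(\tau)=2\sqrt{c}-\beta\ge2\beta-\beta=\beta=\lambda^2+N-1\ge N-1,
\]
which is exactly the reformulated inequality; substituting back through \eqref{C_Pg} and \eqref{C_Tg} yields $C^{\rm pol}_{N,\gamma}\ge C^{\rm tor}_{N,\gamma}+2$. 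The pleasant feature of this route is that, once the identity $B(\lambda)=\beta^2-c$ is in hand, the final estimate needs no squaring arguments at all—the bound $2\sqrt c-\beta\ge\beta$ delivers the constant $N-1$ immediately.
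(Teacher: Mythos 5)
Your proof is correct, and there is in fact nothing in the paper to compare it against: Lemma~\ref{lemma:const2} is quoted from \cite{HL-const_AdM} without proof, so your argument supplies details the paper deliberately omits. I verified the crux identity: with $\lambda=\gamma-\tfrac{N}{2}$, $\beta=\lambda^2+N-1$, $c=4(N-1)(\gamma-1)$, and using $\gamma-1=\lambda+\tfrac{N-2}{2}$, one gets
\[
\beta^2-c=\lambda^4+2(N-1)\lambda^2+(N-1)^2-4(N-1)\lambda-2(N-1)(N-2)=B(\lambda),
\]
since $(N-1)^2-2(N-1)(N-2)=-(N-1)(N-3)$; so the hypothesis $B\bigl(\gamma-\tfrac{N}{2}\bigr)\le0$ is exactly $c\ge\beta^2$, which in particular forces $c>0$. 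The reduction of $C^{\rm tor}_{N,\gamma}+2\le C^{\rm pol}_{N,\gamma}$ to $\min_{\tau\ge0}\bigl(\tau+\tfrac{c}{\tau+\beta}\bigr)\ge N-1$ is an immediate cancellation of the common term $\bigl(\gamma+\tfrac{N-2}{2}\bigr)^2+2$ between \eqref{C_Pg} and \eqref{C_Tg}, as you state. The minimization step is also sound: $h(\tau)=\tau+\tfrac{c}{\tau+\beta}$ is strictly convex on $[0,\infty)$ because $c>0$, and $h'(0)=1-c/\beta^2\le0$ places the minimizer at $\tau^\ast=\sqrt{c}-\beta\ge0$ (the boundary case $c=\beta^2$, where $\tau^\ast=0$, is covered by the same formula), giving $\min_{\tau\ge0}h=2\sqrt{c}-\beta\ge 2\beta-\beta=\beta=\lambda^2+N-1\ge N-1$. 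Note that your argument actually proves the slightly stronger bound $C^{\rm pol}_{N,\gamma}\ge C^{\rm tor}_{N,\gamma}+2+\lambda^2$, since you only discard $\lambda^2\ge0$ at the very last step; the stated lemma is the case needed in \S\ref{subsec:concl}, where only positivity of $G(0)$, equivalently of $B\bigl(\gamma-\tfrac{N}{2}\bigr)$, is extracted by contraposition.
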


Along these lemmas, let us show the non-attainability of $C_{N,\gamma}$ 
separately in the two cases:
\subsubsection*{The case $\gamma\in I_N$
}
For every solenoidal field ${\bm u}\in C_c^\infty(\dot{\mathbb{R}}^N)^N$, the calculation of \eqref{HL_P}$_{{\bm u}={\bm u}_P}$ plus \eqref{H-L_tor_rem}$_{{\bm u}={\bm u}_T}$  yields
\[
\begin{split}
  \int_{\mathbb{R}^N}&|\nabla {\bm u}|^2|{\bm x}|^{2\gamma}dx
=  \int_{\mathbb{R}^N}|\nabla {\bm u}_P|^2|{\bm x}|^{2\gamma}dx+  \int_{\mathbb{R}^N}|\nabla {\bm u}_T|^2|{\bm x}|^{2\gamma}dx
\\& 
 \ge C^{\rm pol}_{N,\gamma}\int_{\mathbb{R}^N}|{\bm u}_P|^2|{\bm x}|^{2\gamma-2}dx+C^{\rm tor}_{N,\gamma}\int_{\mathbb{R}^N}|{\bm u}_T|^2|{\bm x}|^{2\gamma-2}dx
 +\mathcal{R}_{N,\gamma}[{\bm u}_T]
\\& 
 = C_{N,\gamma}\int_{\mathbb{R}^N}|{\bm u}|^2|{\bm x}|^{2\gamma-2}dx+\big(C^{\rm pol}_{N,\gamma}-C^{\rm tor}_{N,\gamma}\big)\int_{\mathbb{R}^N}|{\bm u}_P|^2|{\bm x}|^{2\gamma-2}dx
 +\mathcal{R}_{N,\gamma}[{\bm u}_T].
\end{split}
\]
By the density argument, the same inequality also applies to solenoidal fields ${\bm u}.$ 
If a solenoidal field ${\bm u}$ attains $C_{N,\gamma}$ in \eqref{H-L}, then this fact together with the condition $C^{\rm pol}_{N,\gamma}>C^{\rm tor}_{N,\gamma}$ (from Lemma~\ref{lemma:const1}) implies that 
\[
 \int_{\mathbb{R}^N}|{\bm u}_P|^2|{\bm x}|^{2\gamma-2}dx=0\quad\text{ and }\quad \mathcal{R}_{N,\gamma}[{\bm u}_T]=0,
\]
and hence that
\[
 {\bm u}_P\equiv {\bm 0}\quad\text{and}\quad {\bm x}\cdot\nabla \(|{\bm x}|^{\gamma+\frac{N-2}{2}}{\bm u}_T\)={\bm 0}.
\]
Here the second equation further implies that
\[{\bm u}_T({\bm x})=|{\bm x}|^{-\gamma-\frac{N-2}{2}}{\bm c}\(\frac{\bm{x}}{|\bm{x}|}\)\] 
for some vector field ${\bm c}$ on $\mathbb{S}^{N-1}$; this fact together with the integrability condition $\|{\bm u}_T\|_{\gamma}<\infty$ forces that ${\bm c}\equiv {\bm 0}$. 
Therefore, it turns out that ${\bm u}\equiv {\bm 0}$. 
\subsubsection*{The case $\gamma\not\in I_N$
}
The calculation of \eqref{HL_rem_P}$_{{\bm u}={\bm u}_P}$ plus \eqref{H-L_tor_rem}$_{{\bm u}={\bm u}_T}$ yields that
\[
\begin{aligned}
  \int_{\mathbb{R}^N}|\nabla {\bm u}|^2&|{\bm x}|^{2\gamma}dx
=  \int_{\mathbb{R}^N}|\nabla {\bm u}_P|^2|{\bm x}|^{2\gamma}dx+  \int_{\mathbb{R}^N}|\nabla {\bm u}_T|^2|{\bm x}|^{2\gamma}dx
\\& 
 \ge C^{\rm pol}_{N,\gamma}\int_{\mathbb{R}^N}|{\bm u}_P|^2|{\bm x}|^{2\gamma-2}dx+c_{N,\gamma}\mathcal{R}_{N,\gamma}[{\bm u}_P]
\\&\quad +C^{\rm tor}_{N,\gamma}\int_{\mathbb{R}^N}|{\bm u}_T|^2|{\bm x}|^{2\gamma-2}dx
 +\mathcal{R}_{N,\gamma}[{\bm u}_T]
\\& 
 \ge C_{N,\gamma}\int_{\mathbb{R}^N}|{\bm u}|^2|{\bm x}|^{2\gamma-2}dx
+c_{N,\gamma} \mathcal{R}_{N,\gamma}[{\bm u}]
\end{aligned}
\]
holds for all solenoidal fields ${\bm u}$ in $C_c^\infty(\dot{\mathbb{R}}^N)^N,$ and hence in $\mathcal{D}_\gamma(\mathbb{R}^N)$ by the density argument. 
On the other hand, in view of \eqref{c1}, we find $c_{N,\gamma}>0$ from $G(0)>0$; indeed, a direct computation yields from \eqref{c0} that 
\[
 \begin{split}
 {G}(0)=1-\mfrac{4(\gamma-1)(N-1)}{\(\big(\gamma-\frac{N}{2}\big)^2+N-1\)^2}=\mfrac{B\big(\gamma-\frac{N}{2}\big)}{\(\big(\gamma-\frac{N}{2}\big)^2+N-1\)^2}>0,
 \end{split}
\]
where the last inequality follows by applying the contraposition of Lemma~\ref{lemma:const2} to the inequality $C^{\rm tor}_{N,\gamma}\ge C^{\rm pol}_{N,\gamma}$ (which comes from Lemma~\ref{lemma:const1} and clearly contradicts to $C^{\rm tor}_{N,\gamma}+2\le C^{\rm pol}_{N,\gamma}$).    
 Therefore, 
 any solenoidal field ${\bm u}$ in $\mathcal{D}_\gamma(\mathbb{R}^N)$ which attains $C_{N,\gamma}$ in \eqref{H-L} must satisfy $\mathcal{R}_{N,\gamma}[{\bm u}]=0$, and hence we get ${\bm u}\equiv {\bm 0}$ in the same way as the previous case. 

In summary, the proof of the non-attainability of $C_{N,\gamma}$ 
is now complete, and we finish the proof of  Theorem~\ref{theorem:HL}.

\section*{\bf Acknowledgements}
This work is supported by JSPS KAKENHI Grant numbers JP22KJ2604 
and JP22K13943. The author thanks Prof. Y. Kabeya (Osaka Metropolitan University) for his great support and encouragement. Additionally, this research is partly supported by JSPS KAKENHI Grant-in-Aid for Scientific Research (B) 19H01800 (Prof. F. Takahashi) and Osaka Central Advanced Mathematical Institute (MEXTJoint Usage/Research Center on Mathematics and Theoretical Physics JPMXP0619217849).
\bibliographystyle{amsplain-it}
\bibliography{bibdata,bibmydata}

\end{document}